\algrenewcommand\algorithmicindent{1.0em}
\newcommand{\CL}{\mathbb{C}}
\newcommand{\RL}{\mathbb{R}}
\newcommand{\Laplace}{\mathcal{L}}
\newcommand{\e}{\mathrm{e}}
\newcommand{\ve}{\bm{\mathrm{e}}} 
\newcommand{\dd}{\,\mathrm{d}}
\newcommand{\oh}{{\mathrm{o}}}
\newcommand{\Oh}{{\mathcal{O}}}
\newcommand{\Exp}{\mathbb{E}}
\newcommand{\dNormal}{\sim\mathrm{N}\,}
\newcommand{\dLNormal}{\sim\mathrm{LN}\,}
\newcommand{\dSLNormal}{\sim\mathrm{SLN}\,}
\newcommand{\Prob}{\mathbb{P}}
\newcommand{\Ind}{\mathbb I}
\newcommand\bfsigma{\bm{\sigma}}
\newcommand\bfSigma{\bm{\Sigma}}
\newcommand\bfLambda{\bm{\Lambda}}
\newcommand\Fset[1]{\mathcal{F}_{\!\mathsmaller{#1}}}
\newcommand\sbullet{
  \tikz[baseline=0] \fill (0,0.05) circle (0.035);
}
\newcommand{\ms}[1]{\mathsmaller{#1}}
\newcommand{\bfA}{\bm{A}}
\newcommand{\bfC}{\bm{C}}
\newcommand{\bfD}{\bm{D}}
\newcommand{\bfF}{\bm{F}}
\newcommand{\bfH}{\bm{H}}
\newcommand{\bfI}{\bm{I}}
\newcommand{\bfM}{\bm{M}}
\newcommand{\bfX}{\bm{X}}
\newcommand{\bfa}{\bm{a}}
\newcommand{\bfb}{\bm{b}}
\newcommand{\bfc}{\bm{c}}
\newcommand{\bfe}{\bm{e}}
\newcommand{\bfq}{\bm{q}}
\newcommand{\bfr}{\bm{r}}
\newcommand{\bfu}{\bm{u}}
\newcommand{\bfw}{\bm{w}}
\newcommand{\bfx}{\bm{x}}
\newcommand{\bfy}{\bm{y}}
\newcommand{\bfz}{\bm{z}}
\newcommand\bfzero{\bm{0}}
\newcommand\bfone{\bm{1}}
\newcommand\bfbeta{\bm{\beta}}
\newcommand\bfmu{\bm{\mu}}
\renewcommand{\det}[1]{\mathrm{det}(#1)}
\newcommand{\sqdet}[1]{\sqrt{\det{#1}}}
\newcommand{\LambertW}{\mathcal{W}}
\newcommand{\diag}{\mathrm{diag}}
\renewcommand{\epsilon}{\varepsilon}
\renewcommand{\pi}{\uppi}
\newcommand{\comp}{\mathsf{c}}
\newcommand{\tr}{\top}
\newcommand{\defeq}{\overset{\scriptscriptstyle\mathrm{def}}{=}}
\newcommand{\oset}[3][0ex]{
  \mathrel{\mathop{#3}\limits^{
    \vbox to#1{\kern-2\ex@
    \hbox{$\scriptstyle#2$}\vss}}}}
\newcommand{\approxd}{\oset[.2ex]{{\scriptscriptstyle \mathscr D}}{\approx}}
\newcommand{\iidDist}{\overset{\mathrm{i.i.d.}}{\sim}}
\newcommand{\vlog}{\bm{\mathrm{log}}\,}		  
\shorttitle{Approximating the Laplace transform of the sum of dependent lognormals}
\begin{document}

\title{Approximating the Laplace transform of the sum of dependent lognormals}\\

\authorone[University of Queensland, Aarhus University]{Patrick J.\ Laub}
\authortwo[Aarhus University]{S\O ren Asmussen}
\authorthree[Aarhus University]{Jens L.\ Jensen}
\authorfour[University of Queensland]{Leonardo Rojas-Nandayapa}

\begin{abstract}
Let $(X_1, \dots, X_n)$ be multivariate normal, with mean vector $\bfmu$ and covariance matrix $\bfSigma$,
and $S_n=\e^{X_1}+\cdots+\e^{X_n}$. The Laplace transform ${\cal L}(\theta)=\Exp\e^{-\theta S_n}\propto \int \exp\{-h_\theta(\bfx)\} \dd \bfx$
is represented as $\widetilde {\cal L}(\theta)I(\theta)$, where $\widetilde {\cal L}(\theta)$ is given in closed-form and $I(\theta)$ is the error factor ($\approx 1$). We obtain $\widetilde {\cal L}(\theta)$
by replacing $h_\theta(\bfx)$ with a second order Taylor expansion around its minimiser $\bfx^*$. An algorithm
for calculating the asymptotic expansion of $\bfx^*$ is presented, and it is shown that $I(\theta)\to 1$ as $\theta\to\infty$.
A variety of numerical methods for evaluating $I(\theta)$ are discussed, including Monte Carlo
with importance sampling and quasi-Monte Carlo. Numerical examples (including Laplace transform
inversion for the density of $S_n$) are also given.
\end{abstract}

\keywords{Lognormal distribution; asymptotics; saddlepoint approximation; importance sampling; quasi-Monte Carlo; numerical Laplace transform inversion; Lambert W function.}

\ams{60E10}{44A10; 65C05}

\addressone{Department of Mathematics, The University of Queensland, Brisbane, Queensland 4072, Australia. Email address: p.laub@[uq.edu.au\textbar math.au.dk]}
\addresstwo{Department of Mathematics, Aarhus University, Ny Munkegade, DK-8000 Aarhus C, Denmark. Email address: \email{asmus@imf.au.dk}}
\addressthree{Department of Mathematics, Aarhus University, Ny Munkegade, DK-8000 Aarhus C, Denmark. Email address: \email{jlj@math.au.dk}}
\addressfour{Department of Mathematics, The University of Queensland, Brisbane, Queensland 4072, Australia. Email address: \email{l.rojasnandayapa@uq.edu.au}}

\section{Introduction}

The lognormal distribution arises in a wide variety of disciplines such as engineering,
economics, insurance, and finance, and is often employed in modeling across the sciences \cite{aitchison1957lognormal,crow1988lognormal,dufresne2009sums,johnson1994continuous,limpert2001log}. It has a natural  multivariate version, namely $(\e^{X_1}, \dots, \e^{X_n}) \dLNormal(\bfmu, \bfSigma)$ when $(X_1, \dots, X_n) \dNormal(\bfmu, \bfSigma)$. In this paper, we consider sums of lognormal random variables, $S_n \defeq \e^{X_1} + \cdots + \e^{X_n}$, where the summands exhibit dependence ($\bfSigma$ is non-diagonal), using the notation that $S_n \dSLNormal(\bfmu, \bfSigma)$. Such sums have many challenging properties. In particular, there are no closed-form expressions for the density $f(x)$ or Laplace transform $\Laplace(\theta)$ of $S_n$. \\

Models using sums of dependent lognormals are widely applicable, though they are particularly important in telecommunications and finance \cite{dufresne2004log,dufresne2009sums}. Indeed, many of the approximations for the Laplace transform of sums of independent lognormals originated from the wireless communications community \cite{beaulieu1995estimating}. This reflects the significance of the SLN distribution within many models, and also that the Laplace transform is of intrinsic interest (engineers frequently work in the Laplace domain). In finance, the value of a portfolio (e.g.\ a collection of stocks) is SLN distributed when using the assumptions of the common Black--Scholes framework. Thus the SLN distribution is central to the pricing of certain options (e.g., Asian and basket) \cite{milevsky1998asian}. Also, financial risk managers require estimates of $f(x)$ across $x \in (0, \Exp[S_n])$ to estimate risk measures such as value-at-risk or expected shortfall.  Estimation of this kind has long been a legal requirement for many large banks, due to the Basel series of regulations (particularly, Basel II and Basel III), so in this context approximating $\Laplace(\theta)$ is useful as a vehicle for computing the density $f(x)$
or the c.d.f. These issues are carefully explained in \cite{duellmann2010regulatory}, \cite{embrechts2014academic}, and the new Chapter 1 in the recently revised volume of McNeil et al.\ \cite{mcneil2015quantitative}. Comprehensive surveys of applications and numerical methods for the LN and SLN distributions
are in \cite{tankov2015tail,asmussen2014laplace,asmussen2015exponential}. \\

There exist many approximations to the density of the SLN distribution. Many approximations work from the premise that a sum of lognormals can be accurately approximated by a single lognormal \cite{beaulieu2004highly}, that is $S_n \approxd L$ where $L\sim \mathrm{LN}(\mu_L, \sigma_L^2)$. We refer to this approach as the \emph{SLN $\approx$ LN approximation}. Some well-known SLN $\approx$ LN approximations are the Wilkinson--Fenton \cite{fenton1960sum} and Schwartz--Yeh \cite{schwartz1982distribution} approaches. These were originally specified for sums of \emph{independent} lognormals, but have since been generalised to the dependent case \cite{abu1994outage}. A more recent procedure (for the independent case) is the minimax approximation of Beaulieu and Xie \cite{beaulieu2004optimal} calculating the values of $\mu_L$ and $\sigma_L$ which minimise the maximum difference between the densities of $S_n$ and $L$.
However, \cite{beaulieu2004optimal} concludes that the approach is inaccurate
in large dimensions or when 
the $X_i$ have significantly different means or standard deviations. Finally, Beaulieu and Rajwani \cite{beaulieu2004highly} describe a family of functions which mimic the characteristics of the SLN distribution function (in the independent case) with some success, i.e., high accuracy and closed-form expressions. \\

Another related avenue of research focuses on the asymptotic behaviour of $f(x)$ in the tails. First, Asmussen and Rojas-Nandayapa \cite{asmussen2008asymptotics} characterised the right tail asymptotics. Next, Gao et al.\ \cite{gao2009asymptotic} gave the asymptotic form of the left tail for $n=2$. Gulisashvili and Tankov \cite{tankov2015tail} then provided the left tail asymptotics for linear combinations of $n \geq 2$ lognormal variables. Yet these asymptotic forms cannot be used to approximate $f(x)$ with precision; to quote \cite[p.\ 29]{tankov2015tail}, ``these formulas are not valid for $x \geq 1$ and in practice have very poor accuracy unless $x$ is much smaller than one''.  Similar numerical experience is reported in
Asmussen et al.\ \cite{asmussen2015exponential}. \\

\enlargethispage{\baselineskip} 
The approach taken here is via the Laplace transform. Accurate estimates for the Laplace transform can be numerically inverted to supply accurate density estimates. Asmussen et al.\ \cite{asmussen2014laplace,asmussen2015exponential} outline a framework to estimate $\Laplace(\theta)$ for $n=1$ using a modified saddlepoint approximation. In their work, the transform is decomposed into $\Laplace(\theta)=\widetilde{\Laplace}(\theta)I(\theta)$, where $\widetilde{\Laplace}(\theta)$ has an explicit form and an efficient Monte Carlo estimator is given for $I(\theta)$. \\

This paper generalises the approach of \cite{asmussen2014laplace,asmussen2015exponential} to arbitrary $n$ and dependence.
The defining integral for the Laplace transform of $S_n$ is
\begin{equation} \label{laplace_def}
	\Laplace(\theta) = \frac{1}{\sqrt{(2\pi)^n \det{\bfSigma}}} \int_{\RL^n} \exp\Big\{ {-}\theta \sum_{i=1}^n \e^{\mu_i} \e^{x_i} -\frac12 \bfx^\tr \bfD \bfx \Big\} \dd \bfx
\end{equation}
where $\bfD \defeq \bfSigma^{-1}$ (assuming $\bfSigma$ to be positive definite so $\bfD$ is well-defined).
Write the integrand as $\exp\{-h_\theta(\bfx)\}$. The idea is then to provide an approximation $\widetilde{\Laplace}(\theta)$ by replacing $h_\theta(\bfx)$
by a second order Taylor expansion around its minimiser $\bfx^*$.  Whereas the minimiser $x^*$ has a simple expression
in terms of the Lambert W function when $n=1$, as in \cite{asmussen2014laplace,asmussen2015exponential}, the situation
is much more complex when $n>1$. As one of our main results we give a limit result for $\bfx^*$ as $\theta\to\infty$.
Further, it is shown that the remainder $I(\theta)$ in the representation $\Laplace(\theta)=\widetilde{\Laplace}(\theta)I(\theta)$ goes to 1, a discussion of efficient Monte Carlo estimators of $I(\theta)$ follows, and numerical results showing the errors of our $\Laplace(\theta)$ and (numerically inverted) $f(x)$ estimators  are given. The paper concludes with an informal discussion regarding estimation of the SLN distribution function $F(x)$, and some closing remarks.

\section{Approximating the Laplace transform}\label{S:ApprL}

Although the definition \eqref{laplace_def} makes sense for all $\theta \in \CL$ with $\Re(\theta) > 0$ (we denote this set as $\CL_\ms{+}$), we will restrict the focus to $\theta \in (0, \infty)$. Of particular interest are the terms in the exponent, which in vector form (see Remark \ref{rem_vector} below) are
\[ h_\theta(\bfx) \defeq \theta (\ve^{\bfmu})^\tr \ve^{\bfx} + \frac12 \bfx^\tr \bfD \bfx \,. \]
An approximation of simple form to $\Laplace(\theta)$ --- written as $\widetilde{\Laplace}(\theta)$ --- is available if $h_\theta(\bfx)$ is replaced by a second order Taylor expansion. The expansion is given in the proposition below. \\

\begin{rem}{\emph{On vector notation.}} \label{rem_vector}
All vectors are considered column vectors. Functions applied elementwise to vectors are written in boldface, such as
$\ve^{\bfx} \defeq (\e^{x_1}, \dots, \, \e^{x_n})^\tr$ and
$\vlog \bfx \defeq (\log x_1, \dots , \log x_n)^\tr$. If a vector is to be elementwise raised to a common power, then the power will be boldface, as in $\bfx^{\bm{k}} \defeq (x_1^k, \dots , x_n^k)^\tr$.
The notation $\bfx \circ \bfy$ denotes elementwise multiplication of vectors. The function $\diag(\cdot)$ converts vectors to matrices and vice versa, like the MATLAB function. $\hfill \diamond$
\end{rem}

\begin{prop}
The second order Taylor expansion of $h_\theta(\bfx)$ about its unique minimiser $\bfx^*$ is
\[ - \Big(\bfone - \frac12 \bfx^* \Big)^\tr \bfD \bfx^* + \frac12 (\bfx - \bfx^*)^\tr (\bfLambda + \bfD) (\bfx - \bfx^*) \]
where $\bfLambda \defeq \theta \, \diag(\ve^{\bfmu + \bfx^*})$.
\end{prop}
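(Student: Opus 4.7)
The plan is a direct Taylor expansion around a point we will identify as the minimiser, handled in three steps: derivatives, the stationary condition, and a substitution that cleans up the constant term.

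First I would compute the gradient and Hessian of $h_\theta$ using the paper's elementwise notation. Writing $h_\theta(\bfx) = \theta \sum_{i=1}^n \e^{\mu_i + x_i} + \tfrac12 \bfx^\tr \bfD \bfx$ with $\bfD$ symmetric (since $\bfSigma$ is), one gets componentwise
\[ \nabla h_\theta(\bfx) = \theta \, \ve^{\bfmu + \bfx} + \bfD \bfx, \qquad \nabla^2 h_\theta(\bfx) = \theta \diag(\ve^{\bfmu + \bfx}) + \bfD, \]
because the exponential term depends only diagonally on $\bfx$. At $\bfx = \bfx^*$ the Hessian becomes $\bfLambda + \bfD$, which gives the quadratic part of the expansion immediately.

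Next I would address existence and uniqueness of the minimiser $\bfx^*$, which the statement asserts. Positive-definiteness of $\bfD = \bfSigma^{-1}$ and of $\theta \diag(\ve^{\bfmu+\bfx})$ (for $\theta > 0$) shows that the Hessian is positive definite everywhere, so $h_\theta$ is strictly convex. Coercivity follows from the dominant quadratic term $\tfrac12 \bfx^\tr \bfD \bfx$ together with the nonnegativity of the exponential term, so a unique minimiser $\bfx^*$ exists and is characterised by the stationary equation
\[ \theta \, \ve^{\bfmu + \bfx^*} + \bfD \bfx^* = \bfzero, \qquad \text{equivalently} \qquad \theta \, \ve^{\bfmu + \bfx^*} = -\bfD \bfx^*. \]

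Finally I would simplify $h_\theta(\bfx^*)$. The key trick is to rewrite the exponential sum as $\theta \sum_i \e^{\mu_i + x_i^*} = \theta \, \bfone^\tr \ve^{\bfmu + \bfx^*}$ and then apply the stationary equation from the previous step to replace $\theta \ve^{\bfmu + \bfx^*}$ by $-\bfD \bfx^*$, giving $\theta \, \bfone^\tr \ve^{\bfmu + \bfx^*} = -\bfone^\tr \bfD \bfx^*$. Thus
\[ h_\theta(\bfx^*) = -\bfone^\tr \bfD \bfx^* + \tfrac12 (\bfx^*)^\tr \bfD \bfx^* = -\Big(\bfone - \tfrac12 \bfx^*\Big)^\tr \bfD \bfx^*, \]
which is precisely the constant term in the stated expansion. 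Since the linear term $\nabla h_\theta(\bfx^*)^\tr (\bfx - \bfx^*)$ vanishes at the critical point, assembling the three pieces yields the claim.

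The only real obstacle is keeping the vector/matrix notation clean: the exponential depends on $\bfx$ componentwise, so one must be careful that its Hessian is diagonal and that inner products with $\ve^{\bfmu+\bfx^*}$ collapse to $\bfone^\tr \ve^{\bfmu+\bfx^*}$. Everything else is routine calculus.
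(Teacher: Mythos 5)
Your proof is correct and takes essentially the same approach as the paper: compute the gradient and Hessian, invoke the first-order condition $\theta\,\ve^{\bfmu+\bfx^*}=-\bfD\bfx^*$ to rewrite $h_\theta(\bfx^*)$ as $-\bigl(\bfone-\tfrac12\bfx^*\bigr)^\tr\bfD\bfx^*$, and observe that the linear term vanishes. The only difference is that you spell out coercivity when justifying existence of the minimiser, whereas the paper leaves that implicit after asserting strict convexity.
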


\begin{proof}
As $h_\theta(\bfx)$ is strictly convex, a unique minimum exists. Since $\nabla h_\theta(\bfx^*) = \bfzero$, the linear term
vanishes in the Taylor expansion, so we have
\[ h_\theta(\bfx) \approx h_\theta(\bfx^*) + \frac12 (\bfx - \bfx^*)^\tr \bfH (\bfx - \bfx^*) \]
where $\bfH$ is defined as the Hessian $( \partial^2 h_\theta(\bfx) / \, \partial x_i \, \partial x_j )$ evaluated at $\bfx^*$.
To find the value of $\bfH$, we just take derivatives:
\[ \nabla h_\theta(\bfx) = \theta \ve^{\bfmu + \bfx} + \bfD \bfx \,, \qquad \bfH = \bfLambda + \bfD = \bfD(\bfI + \bfSigma \bfLambda) \,. \]
Since $\bfLambda$ and $\bfD$ are both positive definite, so is $\bfH$. Also, $\nabla h_\theta(\bfx^*) = \bfzero$ gives
\begin{equation} \label{grad_zero_subs}
    {-}\theta \ve^{\bfmu + \bfx^*} = \bfD \bfx^* \text{ which implies } {-}\theta (\ve^{\bfmu})^\tr \ve^{\bfx^*} = \bfone^\tr \bfD \bfx^*.
\end{equation}
Therefore the expansion becomes
\begin{align*}
	h_\theta(\bfx) &\approx -\bfone^\tr \bfD \bfx^* + \frac12 (\bfx^*)^\tr \bfD \bfx^* + \frac12 (\bfx - \bfx^*)^\tr (\bfLambda + \bfD) (\bfx - \bfx^*) \\
	&= - \Big(\bfone - \frac12 \bfx^* \Big)^\tr \bfD \bfx^* + \frac12 (\bfx - \bfx^*)^\tr (\bfLambda + \bfD) (\bfx - \bfx^*) \,.
\end{align*}
$\hfill \square$
\end{proof}

This expansion allows $\Laplace(\theta)$ to be approximated as a constant factor $\exp\{-h_\theta(\bfx^*)\}$ times the integral over a normal density (with inverse covariance $\bfLambda + \bfD$), which leads to
\[
	\Laplace(\theta) \approx \widetilde{\Laplace}(\theta) \defeq \frac{1}{\sqrt{\det{\bfI + \bfSigma \bfLambda}}}  \exp\left\{ \Big(\bfone - \frac12 \bfx^* \Big)^\tr \bfD \bfx^* \right\} \,.
\]

We need a suitable error or correction term in order to assess the accuracy of this approximation, so we will decompose the original integral \eqref{laplace_def} into
$\Laplace(\theta) = \widetilde{\Laplace}(\theta) I(\theta)$.
In the integral of \eqref{laplace_def} change variables such that $\bfx = \bfx^* + \bfSigma^{1/2}(\bfI + \bfSigma \bfLambda)^{-1/2} \bfy$. Then by applying \eqref{grad_zero_subs}, multiplying by $\exp\{\bfone^\tr \bfD \bfx^* - \bfone^\tr \bfD \bfx^* \}$, and rearranging, we arrive at
\begin{align*}
	\Laplace(\theta)
	&= \frac{1}{\sqrt{(2\pi)^n\det{\bfI + \bfSigma \bfLambda}}} \int_{\RL^n} \exp\Big\{ {-}\theta (\ve^{\bfmu + \bfx^*})^\tr \ve^{\bfSigma^{1/2}(\bfI + \bfSigma \bfLambda)^{-1/2} \bfy} \\
	&\qquad - \frac12 (\bfSigma^{1/2}(\bfI + \bfSigma \bfLambda)^{-1/2} \bfy)^\tr \bfD (\bfSigma^{1/2}(\bfI + \bfSigma \bfLambda)^{-1/2} \bfy) \Big\}\dd \bfy \\
	&= \widetilde{L}(\theta) I(\theta)
\end{align*}
where
\begin{equation} \label{I_in_propos}
	\begin{split}
	I(\theta) &\defeq \int_{\RL^n} \frac{1}{\sqrt{(2\pi)^n}} \exp\Big\{ (\bfx^*)^\tr \bfD \Big( \ve^{\bfSigma^{1/2} (\bfI+\bfSigma \bfLambda)^{-1/2} \bfy} - \bfone \\
	&\qquad - \bfSigma^{1/2} (\bfI+\bfSigma \bfLambda)^{-1/2} \bfy \Big)  - \frac12 \bfy^\tr (\bfI + \bfSigma \bfLambda)^{-1} \bfy \Big\} \dd \bfy \,.
	\end{split}
\end{equation}
This form may not be particularly elegant. However, it can be rewritten in ways more convenient for Monte Carlo estimation.

\begin{prop} \label{value_of_I}
We have that
\begin{equation} \label{I_as_norm_exps}
	I(\theta) = \Exp\left[ g(\bfSigma^{1/2} (\bfI+\bfSigma \bfLambda)^{-1/2} Z) \right] = \sqrt{\det{\bfI + \bfSigma \bfLambda}} \,\, \Exp\left[ v(\bfSigma^{1/2} Z) \right]
\end{equation}
where
\[ g(\bfu) \defeq \exp\left\{ (\bfx^*)^\tr \bfD (\ve^{\bfu} - \bfone - \bfu) + \frac12 \bfu^\tr \bfSigma \bfLambda (\bfI + \bfSigma \bfLambda)^{-1} \bfu \right\}\,, \]
\[ v(\bfu) \defeq \exp\left\{ (\bfx^*)^\tr \bfD (\ve^{\bfu} - \bfone - \bfu) \right\}\,, \]
and $Z \dNormal(\bfzero, \bfI)$.
\end{prop}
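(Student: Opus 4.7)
The plan is to establish both identities via linear changes of variable applied to the integral \eqref{I_in_propos}; I would tackle the second equality first, since it is the cleaner of the two, and obtain the first equality by one further substitution.

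\textbf{Second equality.} In \eqref{I_in_propos} substitute $\bfy = (\bfI+\bfSigma\bfLambda)^{1/2}\bfw$ (the square root interpreted consistently with the one used to define the original change of variables). The two transforms cancel: $\bfSigma^{1/2}(\bfI+\bfSigma\bfLambda)^{-1/2}\bfy = \bfSigma^{1/2}\bfw$, so the nonlinear part of the exponent collapses to $(\bfx^*)^\tr\bfD(\ve^{\bfSigma^{1/2}\bfw}-\bfone-\bfSigma^{1/2}\bfw) = \log v(\bfSigma^{1/2}\bfw)$. The quadratic form $-\tfrac12\bfy^\tr(\bfI+\bfSigma\bfLambda)^{-1}\bfy$ reduces to $-\tfrac12\bfw^\tr\bfw$, and the absolute Jacobian $\sqrt{\det{\bfI+\bfSigma\bfLambda}}$ pulls out as a prefactor. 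What is left is the $N(\bfzero,\bfI)$ density times $v(\bfSigma^{1/2}\bfw)$, yielding $I(\theta)=\sqrt{\det{\bfI+\bfSigma\bfLambda}}\,\Exp[v(\bfSigma^{1/2}Z)]$.

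\textbf{First equality.} Return to \eqref{I_in_propos} and split the quadratic form as
\begin{equation*}
-\tfrac12 \bfy^\tr (\bfI+\bfSigma\bfLambda)^{-1}\bfy \;=\; -\tfrac12 \bfy^\tr\bfy \;+\; \tfrac12 \bfy^\tr\bigl[\bfI - (\bfI+\bfSigma\bfLambda)^{-1}\bigr]\bfy,
\end{equation*}
invoking the identity $\bfI - (\bfI+\bfSigma\bfLambda)^{-1} = \bfSigma\bfLambda(\bfI+\bfSigma\bfLambda)^{-1}$ (immediate from $(\bfI+\bfSigma\bfLambda)^{-1}(\bfI+\bfSigma\bfLambda)=\bfI$, noting that $\bfSigma\bfLambda$ commutes with every analytic function of itself). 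Rewriting the surviving quadratic in terms of $\bfu\defeq \bfSigma^{1/2}(\bfI+\bfSigma\bfLambda)^{-1/2}\bfy$ turns it into $\tfrac12\bfu^\tr\bfSigma\bfLambda(\bfI+\bfSigma\bfLambda)^{-1}\bfu$, which is precisely the difference between $\log g(\bfu)$ and $\log v(\bfu)$. The integrand then equals $g(\bfu)$ times the $N(\bfzero,\bfI)$ density of $\bfy$, producing $I(\theta)=\Exp[g(\bfSigma^{1/2}(\bfI+\bfSigma\bfLambda)^{-1/2}Z)]$.

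\textbf{Main obstacle.} The primary nuisance is bookkeeping around the non-commuting product $\bfSigma\bfLambda$: the matrix $\bfI+\bfSigma\bfLambda$ is not symmetric, so the symbols $(\bfI+\bfSigma\bfLambda)^{\pm 1/2}$ need a fixed interpretation, and several intermediate quadratic forms are only meaningful through their symmetric parts. A clean way to discharge this is to pass through the symmetric auxiliary $\bfC\defeq \bfSigma^{1/2}\bfLambda\bfSigma^{1/2}$ via the similarity $\bfI+\bfSigma\bfLambda = \bfSigma^{1/2}(\bfI+\bfC)\bfSigma^{-1/2}$; then all square roots and the determinant can be taken in the symmetric setting, $\det{\bfI+\bfSigma\bfLambda}=\det{\bfI+\bfC}$, and the quadratic-form identity needed for the first equality reduces to the manifestly symmetric statement $\bfI-(\bfI+\bfC)^{-1}=\bfC(\bfI+\bfC)^{-1}$.
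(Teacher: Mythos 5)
Your route is the same as the paper's: for the second equality the paper changes variables $\bfz=(\bfI+\bfSigma\bfLambda)^{-1/2}\bfy$, which is exactly your substitution $\bfy=(\bfI+\bfSigma\bfLambda)^{1/2}\bfw$, and for the first equality the paper adds and subtracts $\frac12\bfy^\tr\bfSigma\bfLambda(\bfI+\bfSigma\bfLambda)^{-1}\bfy$, which is your split via $\bfI-(\bfI+\bfSigma\bfLambda)^{-1}=\bfSigma\bfLambda(\bfI+\bfSigma\bfLambda)^{-1}$. Your symmetrisation remark through $\bfC=\bfSigma^{1/2}\bfLambda\bfSigma^{1/2}$ addresses a genuine looseness in the paper's notation and is a sensible way to fix the meaning of the square roots.

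One step deserves a concrete caveat. You assert that the surviving quadratic $\frac12\bfy^\tr\bfSigma\bfLambda(\bfI+\bfSigma\bfLambda)^{-1}\bfy$ ``turns into'' $\frac12\bfu^\tr\bfSigma\bfLambda(\bfI+\bfSigma\bfLambda)^{-1}\bfu$ under $\bfu=\bfSigma^{1/2}(\bfI+\bfSigma\bfLambda)^{-1/2}\bfy$; that is not an identity, since this linear map is not an isometry for that quadratic form. In the univariate case ($\Sigma=\sigma^2$, $\Lambda=\lambda$) the left side is $\frac12 y^2\,\sigma^2\lambda/(1+\sigma^2\lambda)$ while the right side is $\frac12 y^2\,\sigma^4\lambda/(1+\sigma^2\lambda)^2$, and these agree only when $\sigma^2=1+\sigma^2\lambda$. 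Carrying the substitution out correctly (in your symmetric setting the surviving quadratic is $\frac12\bfy^\tr\bfC(\bfI+\bfC)^{-1}\bfy$ with $\bfy=(\bfI+\bfC)^{1/2}\bfSigma^{-1/2}\bfu$) yields $\frac12\bfu^\tr\bfLambda\bfu$, so the exact first representation is $I(\theta)=\Exp[\tilde g(\bfSigma^{1/2}(\bfI+\bfSigma\bfLambda)^{-1/2}Z)]$ with $\tilde g(\bfu)=\exp\{(\bfx^*)^\tr\bfD(\ve^{\bfu}-\bfone-\bfu)+\frac12\bfu^\tr\bfLambda\bfu\}$; equivalently one must leave the correction term as a function of the untransformed standard normal vector, as the paper's own proof does (it stops at the add-and-subtract display and never converts the correction from $\bfy$ to $\bfu$). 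So your slip simply mirrors the way $g$ is written in the statement, but as a verification of the displayed first equality the rewrite as you claim it would not survive checking. The second equality --- the representation actually used for the estimators --- you prove correctly and exactly as in the paper.
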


\begin{proof}
To show that $I(\theta)$ can be written as the first expectation in \eqref{I_as_norm_exps}, replace $\frac12 \bfy^\tr (\bfI + \bfSigma \bfLambda)^{-1} \bfy $ in \eqref{I_in_propos} with
\begin{align*}
	&~~- \frac12 \bfy^\tr (\bfI + \bfSigma \bfLambda)^{-1} \bfy - \frac12 \bfy^\tr \bfSigma \bfLambda (\bfI + \bfSigma \bfLambda)^{-1} \bfy + \frac12 \bfy^\tr \bfSigma \bfLambda (\bfI + \bfSigma \bfLambda)^{-1} \bfy \\
	&= -\frac12 \bfy^\tr \bfI \bfy + \frac12 \bfy^\tr \bfSigma \bfLambda (\bfI + \bfSigma \bfLambda)^{-1} \bfy \,.
\end{align*}

To prove $I(\theta)$ equals the second expectation of \eqref{I_as_norm_exps},  change variables in \eqref{I_in_propos} to $\bfz = (\bfI+ \bfSigma \bfLambda)^{-1/2} \bfy$, so
\begin{equation} \label{std_norm_form}
	I(\theta) = \sqdet{\bfI + \bfSigma \bfLambda} \int_{\RL^n} \frac{1}{\sqrt{(2\pi)^n}} \exp\Big\{ (\bfx^*)^\tr \bfD \Big( \ve^{\bfSigma^{1/2}\bfz} - \bfone - \bfSigma^{1/2}\bfz \Big) - \frac12 \bfz^\tr \bfI \bfz \Big\}\dd \bfz \,.
\end{equation}
$\hfill \square$
\end{proof}

\begin{rem} When $n=1$, $\bfSigma = \sigma^2$, and $\mu=0$, \eqref{std_norm_form} becomes
\[
	I(\theta) = \sqrt{ 1 + \theta \sigma^2 \e^{x^*}} \int_{\RL} \frac{1}{\sqrt{2\pi}} \exp\Big\{ \frac{x^*}{\sigma^2} \Big( \e^{\sigma z} - 1 - \sigma z \Big) - \frac12 z^2 \Big\} \dd z \,.
\]
This can be simplified using the \emph{Lambert W} function, denoted $\LambertW(\cdot)$,
which is defined as the solution to the equation $\LambertW(z) \e^{\LambertW(z)} = z$ \cite{corless1996lambertw}. With this we have $x^* = -\LambertW(\theta \sigma^2)$. Also, we can manipulate
\[ \sqrt{ 1 + \theta \sigma^2 \e^{x^*}} =  \sqrt{ 1 - x^* } = \sqrt{ 1 + \LambertW(\theta \sigma^2)} \]
so $I(\theta)$ becomes
\[
	I(\theta) = \sqrt{ 1 + \LambertW(\theta \sigma^2)} \int_{-\infty}^\infty \frac{1}{\sqrt{2\pi}} \exp\Big\{ {-}\frac{\LambertW(\theta \sigma^2)}{\sigma^2} \Big( \e^{\sigma z} - 1 - \sigma z \Big) - \frac12 z^2 \Big\} \dd z
\]
which coincides with the original result of \cite{asmussen2014laplace} equation (2.3). $\hfill \diamond$
\end{rem}

\section{Asymptotic behaviour of the minimiser $\bfx^*$} \label{sec:x_star_asymp_}

We first introduce some notation. For a matrix $\bfX$, we write  $\bfX_{i,\sbullet}$ and $\bfX_{\sbullet,i}$ for the $i$th row and column. Denote the row sums of $\bfD$ as $\bfa = (a_1,\dots,a_n)^\tr$, that is, $a_i=\bfD_{i,\sbullet} \, \bfone$. For sets of indices $\Omega_1$ and $\Omega_2$, then $\bfX_{\Omega_1,\Omega_2}$ denotes the submatrix of $\bfX$ containing row/column pairs in $\{ (u,v) : u \in \Omega_1, v \in \Omega_2\}$.  A shorthand is used for iterated logarithms: $\log_1 \theta \defeq \log \theta$ and $\log_n \theta \defeq \log \log_{n-1} \theta$ for $n \ge 2$ (note that $\log_k \theta$ is undefined for small or negative $\theta$, however this is no problem as we are considering the case $\theta \to \infty$). \\

The approach taken to find $\bfx^*=(x_1^*,\dots,x_n^*)^\tr$ is to set the gradient of $h_\theta(\bfx)$ to $\bfzero$, that is, to solve
\begin{equation} \label{to_solve_xstar}
	\theta \ve^{\bfmu + \bfx^*} + \bfD \bfx^* = \bfzero \,.
\end{equation}
We will show that the asymptotics of the $x_i^*$ are of the form
\begin{equation} \label{general_form}
	x_i^* = \sum_{j=1}^{n} \beta_{i,j} \log_j \theta - \mu_i + c_i + r_i(\theta)
\end{equation}
for some $\bfbeta = (\beta_{i,j}) \in \RL^{n \times n}$, $\bfc = (c_1, \dots, c_n)^\tr \in \RL^n$, and $\bfr(\theta) = (r_1(\theta), \dots, r_n(\theta))^\tr$ where each $r_i(\theta) = \oh(1)$.
Before giving the general result, we consider the special case where all $a_i>0$ since this result and its proof are much simpler.

\begin{prop} \label{all_pos_prop}
If all row sums $\bfD$ are positive then the minimiser $\bfx^*$ takes the form
\begin{equation} \label{all_pos_x_star}
    x_i^* = {-}\log \theta + \log_2 \theta - \mu_i + \log a_i + r_i(\theta)
\end{equation}
where $r_i(\theta) = \Oh(\log_2 \theta / \log \theta) = \oh(1)$ for $1 \leq i  \leq n$, as $\theta \to \infty$.
\end{prop}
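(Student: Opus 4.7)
The plan is to treat this as a verification problem: since $h_\theta$ is strictly convex, $\bfx^*$ is the unique solution of \eqref{to_solve_xstar}, so it suffices to exhibit a solution of the prescribed form. First I would pass to the shifted unknown $\bm\zeta(\theta) \defeq \bfx^*(\theta) + (\log\theta)\bfone + \bfmu$, so that $\theta\,\e^{\mu_i + x_i^*} = \e^{\zeta_i}$ and \eqref{to_solve_xstar} becomes the componentwise system
\[ \e^{\zeta_i} \;=\; a_i \log\theta + (\bfD\bfmu)_i - (\bfD\bm\zeta)_i, \qquad i = 1,\ldots,n. \]
In these variables the claim \eqref{all_pos_x_star} is equivalent to $\zeta_i = \log_2 \theta + \log a_i + \Oh(\log_2\theta/\log\theta)$.

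Second, I would obtain an a priori bound on $\|\bm\zeta\|_\infty$ by an energy estimate. Choosing the test point $\bfy \defeq -(\log\theta)\bfone - \bfmu + (\log_2\theta)\bfone + \vlog \bfa$ and computing term by term, one finds $h_\theta(\bfy) = \tfrac12 (\log\theta)^2\,\bfone^\tr \bfD \bfone + \Oh(\log\theta \log_2\theta)$. Combining $h_\theta(\bfx^*) \le h_\theta(\bfy)$ with $\tfrac12(\bfx^*)^\tr \bfD \bfx^* \le h_\theta(\bfx^*)$ and $\bfD \succeq \lambda_{\min}\bfI$ yields $\|\bfx^*\|_\infty = \Oh(\log\theta)$, hence $\|\bm\zeta\|_\infty = \Oh(\log\theta)$. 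Plugged back into the system, this forces $\e^{\zeta_i} = \Oh(\log\theta)$, giving the upper bound $\zeta_i \le \log_2\theta + \Oh(1)$. The matching lower bound $\zeta_i \ge \log_2\theta - \Oh(1)$ is the delicate step; I would argue by contradiction, taking an index $i$ attaining the smallest $\zeta_i$ along a hypothetical subsequence and using $a_i > 0$ together with positive-definiteness of $\bfD$ to rule out cancellation of $a_i\log\theta$ by $(\bfD\bm\zeta)_i$.

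Finally, once $\|\bm\zeta - (\log_2\theta)\bfone\|_\infty = \Oh(1)$, writing $\zeta_i = \log_2\theta + \log a_i + s_i$ and dividing the system through by $\log\theta$ gives
\[ \e^{s_i} \;=\; 1 + \frac{(\bfD\bfmu)_i - (\bfD\bm\zeta)_i}{a_i \log\theta} \;=\; 1 + \Oh\!\Big(\frac{\log_2\theta}{\log\theta}\Big), \]
since $(\bfD\bm\zeta)_i = a_i\log_2\theta + \Oh(1)$. Taking logarithms produces $s_i = \Oh(\log_2\theta/\log\theta)$, which, translated back through $\bfx^* = -(\log\theta)\bfone - \bfmu + \bm\zeta$, is exactly \eqref{all_pos_x_star}.

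The main obstacle is the lower bound on $\zeta_i$: the energy argument and the system read off from \eqref{to_solve_xstar} only directly supply $\|\bm\zeta\|_\infty = \Oh(\log\theta)$ and a clean upper bound, whereas excluding $\zeta_i \to -\infty$ requires a more careful componentwise argument. The positivity hypothesis $a_i > 0$ is essential throughout: it selects $a_i \log\theta$ as the dominant right-hand side, keeps the logarithm well-defined, and makes the above bootstrap possible. Without it the leading balance in the fixed-point equation changes, which is precisely why the general case requires the more intricate treatment given later in the section.
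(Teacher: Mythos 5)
Your route is genuinely different from the paper's and in principle more rigorous. The paper's own proof is a short verification: it substitutes the ansatz \eqref{all_pos_x_star} into \eqref{to_solve_xstar}, observes from the resulting identity that $\limsup_\theta\max_i r_i=\liminf_\theta\min_i r_i=0$, and then reads off the $-\log_2\theta/\log\theta$ leading correction; no a priori bound on $\bfr$ is established before the limits are asserted. You instead pass to $\bm{\zeta}=\bfx^*+(\log\theta)\bfone+\bfmu$, prove $\|\bm{\zeta}\|_\infty=\Oh(\log\theta)$ by strong convexity against a test point, and obtain the upper bound $\zeta_i\le\log_2\theta+\Oh(1)$ by substituting back. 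Those two phases are correct, and your final division-by-$\log\theta$ step does produce $r_i=\Oh(\log_2\theta/\log\theta)$ once the matching lower bound is available.

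The gap you flag is real, and the contradiction sketch you offer for it does not obviously close. From the single equation for an index $i_0$ minimising $\zeta_i$, bounding every other $\zeta_j$ between $\zeta_{i_0}$ and $\log_2\theta+\Oh(1)$ only gives
\[ (\bfD\bm{\zeta})_{i_0}\ \le\ (D_{i_0 i_0}-q_{i_0})\,\zeta_{i_0}\;+\;p_{i_0}\big(\log_2\theta+\Oh(1)\big), \]
where $p_{i_0}$ is the sum of the positive off-diagonal entries in row $i_0$ and $q_{i_0}$ the sum of absolute values of the negative ones. The hypothesis $a_{i_0}>0$ says $D_{i_0 i_0}+p_{i_0}>q_{i_0}$, but it does \emph{not} rule out $D_{i_0 i_0}<q_{i_0}$: the row $(1,-1.5,1)$, for instance, can sit inside a positive-definite $3\times 3$ matrix with all row sums positive. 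In that regime the displayed bound tends to $+\infty$ as $\zeta_{i_0}\to-\infty$, so the needed contradiction with $(\bfD\bm{\zeta})_{i_0}\approx a_{i_0}\log\theta$ does not materialise from that one equation, and ``positive definiteness rules out cancellation'' is left doing unspecified work.

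The cleanest repair is to iterate the energy argument you already have, rather than replace it by a componentwise one. Set $\bm{\eta}\defeq(\log_2\theta)\bfone-\bm{\zeta}$, so the upper bound you proved reads $\eta_i\ge -C$. In the $\bm{\eta}$ coordinates, $h_\theta$ equals, up to an $\bm{\eta}$-independent additive constant,
\[ \phi(\bm{\eta})\ =\ \log\theta\sum_i\e^{-\eta_i}\;+\;(\log\theta-\log_2\theta)\,\bfa^\tr\bm{\eta}\;+\;\bfmu^\tr\bfD\,\bm{\eta}\;+\;\tfrac12\,\bm{\eta}^\tr\bfD\,\bm{\eta}, \]
and the minimiser satisfies $\phi(\bm{\eta})\le\phi(\bfzero)=n\log\theta$. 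With $\eta_i\ge -C$ and all $a_i>0$, the linear term obeys $\bfa^\tr\bm{\eta}\ge a_{\min}\max_i\eta_i-\Oh(1)$, so $\phi(\bm{\eta})\ge\tfrac12 a_{\min}(\log\theta)\max_i\eta_i-\Oh(\log\theta)$ for $\theta$ large, forcing $\max_i\eta_i=\Oh(1)$. That is exactly the missing bound $\zeta_i\ge\log_2\theta-\Oh(1)$, after which the rest of your proof goes through verbatim.
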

\begin{proof}
Inserting \eqref{all_pos_x_star} in \eqref{to_solve_xstar} we find
\[
    \theta \ve^{\bfmu + \bfx^*} + \bfD \bfx^*
    = (\bfa \log \theta) \circ \ve^{\bfr(\theta)} - \bfa \log \theta + \bfa \log_2 \theta - \bfD  \bfmu + \bfD \, \vlog \bfa + \bfD \bfr(\theta) = \bfzero \,.
\]
Looking at these equations we see that we must have
\[
 \limsup_\theta \max_i r_i(\theta) = \liminf_\theta \min_i r_i(\theta) = 0\,,
\]
and to remove the $\log_2 \theta$ term the main term of $r_i(\theta)$ has to be $-\log_2 \theta /\log \theta$. This gives the result of the proposition. $\hfill \square$
\end{proof}

In the general case where some $a_i\le 0$, the asymptotic form of $\bfx^*$ is different from \eqref{all_pos_x_star}
and its derivation is much more intricate.
\begin{thm}\label{Th:24.8a}
There exists a partition of $\{1,\ldots,n\}$ into $\Fset{+}$ and $\Fset{-}$ such that for $i \in \Fset{+}$
\[
x_i^*\ =\ -\log\theta+\log_{k_i}\theta- \mu_i + c_i +\oh(1)
\]
for some $1<k_i\le n$. All $x_i^*$ in $\Fset{-}$ follow the general form of \eqref{general_form}.
In more detail, there exists a partition of $\Fset{-}$ into $\Fset{-}(1)$ and $\Fset{-} \setminus \Fset{-}(1)$, such that if $i\in \Fset{-}(1)$ then $\beta_{i,1} < -1$ and if $i \in \Fset{-} \setminus \Fset{-}(1)$ then
\[ \beta_{i,1} = -1, \beta_{i,2} = \ldots = \beta_{i,k_i-1} = 0, \beta_{i,k_i} < 0 \]
for some $1<k_i\le n$.
Finally we have, writing subscripts $+$ and $-$ for $\Fset{+}$ and $\Fset{-}$, that
$\bfx_\ms{-} = \bfC \bfx_\ms{+} + \oh(1)$ where $\bfC = -\bfD_{\ms{-},\ms{-}}^{-1}\bfD_{\ms{-},\ms{+}}$. The sets $\Fset{+}$, $\Fset{-}$, $\Fset{-}(1)$ and the constants $\beta_{i,j}$, $c_i$, $k_i$ are determined by Algorithm 3.1 below.
\end{thm}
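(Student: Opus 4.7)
The plan is to plug the ansatz \eqref{general_form} into the gradient equation \eqref{to_solve_xstar} and match terms by their order in $\theta$. The partition $\Fset{+},\Fset{-}$ is characterised by which rows $i$ admit a nontrivial balance between the exponential $\theta\e^{\mu_i+x_i^*}$ and the linear combination $(\bfD\bfx^*)_i$: by definition $i\in\Fset{-}$ iff $\theta\e^{\mu_i+x_i^*}=\oh(1)$, which then forces $(\bfD\bfx^*)_i=\oh(1)$. Splitting this identity across the partition gives
\[
\bfD_{\ms{-},\ms{-}}\bfx_\ms{-}+\bfD_{\ms{-},\ms{+}}\bfx_\ms{+}=\oh(1),
\]
and since positive-definiteness of $\bfD$ makes the principal submatrix $\bfD_{\ms{-},\ms{-}}$ invertible, one obtains immediately the final claim $\bfx_\ms{-}=\bfC\bfx_\ms{+}+\oh(1)$ with $\bfC=-\bfD_{\ms{-},\ms{-}}^{-1}\bfD_{\ms{-},\ms{+}}$.

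Substituting this back into the rows $i\in\Fset{+}$ produces a reduced nonlinear system on $\Fset{+}$ alone,
\[
\theta\ve^{\bfmu_\ms{+}+\bfx_\ms{+}}+\bfE\bfx_\ms{+}=\oh(1),
\]
where $\bfE=\bfD_{\ms{+},\ms{+}}-\bfD_{\ms{+},\ms{-}}\bfD_{\ms{-},\ms{-}}^{-1}\bfD_{\ms{-},\ms{+}}$ is the positive-definite Schur complement. In the favourable case where every row sum of $\bfE$ is strictly positive, Proposition \ref{all_pos_prop} applied to this reduced system directly yields $x_i^*=-\log\theta+\log_2\theta-\mu_i+\log(\bfE\bfone)_i+\oh(1)$ for $i\in\Fset{+}$, so that $k_i=2$ and $c_i=\log(\bfE\bfone)_i$.

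The main obstacle is that the correct partition is not available a priori, and in general $k_i$ can exceed $2$. The role of Algorithm 3.1 is precisely this: starting from $\Fset{+}=\{1,\dots,n\}$ and successively moving to $\Fset{-}$ those indices whose current Schur-complement row sum is non-positive, the algorithm refines the partition until all remaining row sums are strictly positive. Each refinement strictly shrinks $\Fset{+}$, so termination happens in at most $n-1$ steps, giving $k_i\le n$; indices reclassified at the $k$-th step pick up their leading correction at the $\log_k\theta$ scale rather than at the $\log_2\theta$ scale, which produces the iterated-logarithm structure of the $\beta_{i,j}$. The secondary split of $\Fset{-}$ into $\Fset{-}(1)$ versus $\Fset{-}\setminus\Fset{-}(1)$ is then read off the relation $\bfx_\ms{-}=\bfC\bfx_\ms{+}+\oh(1)$: rows of $\bfC$ whose action on the dominant $-\log\theta$ term of $\bfx_\ms{+}$ yields a coefficient strictly below $-1$ place $i$ in $\Fset{-}(1)$, while the remaining rows inherit $\beta_{i,1}=-1$ and are then driven, by the requirement $\theta\e^{\mu_i+x_i^*}=\oh(1)$, to acquire a negative $\log_{k_i}\theta$ correction at the first non-degenerate level $k_i$.

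Having produced the candidate ansatz at algorithm termination, the concluding step is to verify that it satisfies \eqref{to_solve_xstar} to the claimed precision on every row, propagating the error estimates backwards through the Schur-complement reductions. Uniqueness of $\bfx^*$, which follows from the strict convexity of $h_\theta$ already used in the Proposition of Section~\ref{S:ApprL}, then guarantees that this asymptotic expansion coincides with that of the true minimiser, completing the proof.
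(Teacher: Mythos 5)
Your opening and closing moves are fine: defining $\Fset{-}$ as the rows where $\theta\e^{\mu_i+x_i^*}=\oh(1)$ and deducing $\bfx_\ms{-}=\bfC\bfx_\ms{+}+\oh(1)$ from invertibility of the principal submatrix $\bfD_{\ms{-},\ms{-}}$ is exactly how the paper obtains the final claim, and your Schur-complement reduction correctly captures the special indices with $k_i=2$ (this is the content of Corollary~\ref{Cor:24.8d}, where $c_i=\log\overline a_i$ with $\overline\bfa$ the row sums of $\overline\bfD$). But the core of the theorem is missing. First, you have misdescribed Algorithm 3.1: it is not a loop that ``moves to $\Fset{-}$ those indices whose current Schur-complement row sum is non-positive'' (indeed, in the algorithm $\Fset{+}(k)$ grows rather than shrinks, and it is $\Fset{0}(k)$ that must lose an element at each step). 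The algorithm instead solves a sequence of constrained quadratic programs, minimising $\bfw^\tr\bfD\bfw$ subject to equality/inequality constraints inherited from the previous step, and the columns $\bfbeta_{\sbullet,k}$ \emph{are} these minimisers. The entire verification that the ansatz \eqref{general_form} solves \eqref{to_solve_xstar} up to $\oh(1)$ rests on the KKT-type sign conditions of these programs --- $\bfD_{i,\sbullet}\,\bfbeta_{\sbullet,j}=0$ for $j<k_i-1$ and $\bfD_{i,\sbullet}\,\bfbeta_{\sbullet,k_i-1}<0$ for $i\in\Fset{+}$, and $\bfD_{i,\sbullet}\,\bfbeta_{\sbullet,j}=0$ for all $j$ when $i\in\Fset{-}$ --- which make the divergent iterated-logarithm terms cancel row by row and simultaneously deliver the constants $c_i=\log(-\bfD_{i,\sbullet}\,\bfbeta_{\sbullet,\ell_i})$. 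Your proposal contains no mechanism producing these orthogonality and sign relations; the assertion that indices reclassified at step $k$ ``pick up their leading correction at the $\log_k\theta$ scale'' is exactly the statement that needs proof, and your row-sum heuristic does not yield the non-integer coefficients $\beta_{i,j}$ that occur already in the paper's second example.

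Second, you omit the termination/progress argument: one must show that at each step at least one index of $\Fset{0}(k-1)$ satisfies $\bfD_{i,\sbullet}\,\bfbeta_{\sbullet,k}<0$, so that the recursion ends within $n$ steps and every $i\in\Fset{+}$ receives a finite level $k_i\le n$ with a strictly negative value to exponentiate into $c_i$. In the paper this is a separate appendix lemma proved by contradiction using the cumulative vectors $d_k$ and positive definiteness of a suitable submatrix of $\bfD$; nothing in your sketch substitutes for it, and without it the claimed structure of $\Fset{+}$, $\Fset{-}(1)$ and the sign $\beta_{i,k_i}<0$ on $\Fset{-}\setminus\Fset{-}(1)$ is unsupported. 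The final appeal to strict convexity and uniqueness of $\bfx^*$ is a reasonable closing gesture (the paper argues in the same verification style), but it cannot repair the absence of the constrained-minimisation construction that defines, and justifies, the $\beta_{i,j}$, $c_i$ and $k_i$ in the first place.
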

See Remark~\ref{Rem:25.8b} for some further remarks on the role of the signs of the row sums.\\[2mm]

\noindent
{\bf Algorithm 3.1:}
\begin{enumerate}
\item Let $\bfbeta_{\sbullet,1}$ be the value of $\bfw$ that minimises
$\bfw^\tr \bfD \bfw$ over the set $\{ \bfw : w_i \leq -1\}$.
It will be proved in the appendix that the solution has
$\bfD_{i,\sbullet} \, \bfbeta_{\sbullet,1} \leq 0$ when $\beta_{i,1} = -1$ and
$\bfD_{i,\sbullet} \, \bfbeta_{\sbullet,1} = 0$ when $\beta_{i,1} < -1$. Accordingly, we can  partition $\{1,\ldots,n\}$ into the disjoint sets
\[
	\Fset{+}(1) = \emptyset, \quad
	\Fset{*}(1) = \{ i : \bfD_{j,\sbullet} \, \bfbeta_{\sbullet,1} < 0\},
\]
\[
	\Fset{0}(1) = \{ i : \beta_{i,1} = -1, \bfD_{i,\sbullet} \, \bfbeta_{\sbullet,1} = 0\}, \quad
	\Fset{-}(1) = \{ i : \beta_{i,1} < -1\}.
\]
\item For $k = 2, \dots, n$ recursively calculate $\bfbeta_{\sbullet,k}$ as the value
of $\bfw$ that minimises $\bfw^\tr \bfD \bfw$ whilst satisfying
\[
 w_i = 0\ \text{for}\ i\in \Fset{+}(k-1), \quad
 w_i = 1\ \text{for}\ i\in \Fset{*}(k-1),
\]
\[
 w_i \leq 0\ \text{for}\ i\in \Fset{0}(k-1), \quad
 \bfD_{i,\sbullet} \, \bfw = 0\ \text{for}\ i\in \Fset{-}(k-1).
\]
It will be proved in the appendix that the solution has
$\bfD_{i,\sbullet} \, \bfbeta_{\sbullet,k} \leq 0$ for $i \in \Fset{0}(k-1)$,
$\bfD_{i,\sbullet} \, \bfbeta_{\sbullet,k} = 0$ when $\beta_{i,k} < 0$ for $i\in \Fset{0}(k-1)$,
and at least one element of $\Fset{0}(k-1)$ has
$\bfD_{i,\sbullet} \beta_{\sbullet,k} < 0$. This allows us to
create a new partition by
\begin{align*}
	\Fset{+}(k) &= \Fset{+}(k-1) \,\cup\, \Fset{*}(k-1), \\
	\Fset{*}(k) &= \{ i \in \Fset{0}(k-1) : \beta_{i,k}=0, \bfD_{i,\sbullet} \, \bfbeta_{\sbullet,k} < 0 \}, \\
	\Fset{0}(k) &= \{ i \in \Fset{0}(k-1) : \beta_{i,k}=0, \bfD_{i,\sbullet} \, \bfbeta_{\sbullet,k} = 0 \}, \\
	\Fset{-}(k) &= \Fset{-}(k-1) \,\cup\, \{ i \in \Fset{0}(k-1) : \beta_{i,k} < 0 \}.
\end{align*}
Terminate the loop early if $\Fset{0}(k-1) = \emptyset$.
\item Say $\Fset{+} = \Fset{+}(k)$ and $\Fset{-} = \Fset{-}(k)$. For each $i \in \Fset{+}$, let $\ell_i$ to be the index of the first element of $\bfD_{i,\sbullet} \, \bfbeta$ which is negative, and we have $c_i = \log(-\bfD_{i,\sbullet} \, \bfbeta_{\sbullet,\ell_i})$. Determine the remaining elements (using the same subscript shorthand introduced above) by
\begin{equation} \label{consts}
	\bfc_\ms{-} = -\bfD_{\ms{-},\ms{-}}^{-1} \bfD_{\ms{-},\ms{+}} (\bfc_\ms{+} - \bfmu_\ms{+}) + \bfmu_\ms{-}  \,.
\end{equation}
\end{enumerate}

\begin{proof} [Proof of Theorem~{\rm \ref{Th:24.8a}}]
We propose a solution of the form \eqref{general_form} and show that when the $\beta_{i,j}$ are constructed from Algorithm 3.1, the remainder term $r_i$ is $\oh(1)$. \\

The construction allows us to draw the following conclusions for the
$x_i^*$. Let $\Fset{+}$ and $\Fset{-}$ be the sets as defined in Step 3 above.
Consider individually the indices which terminated in the
$\Fset{+}$ and in the $\Fset{-}$ sets. In the first case, there
exists a $k_i$ with $1 < k_i \leq n$ such that
\[
 \beta_{i,j} = \begin{cases}
	-1, & j = 1, \\
	1, & j = k_i, \\
	0, & \text{otherwise},
\end{cases} \quad \text{and} \quad \bfD_{i,\sbullet} \, \bfbeta_{\sbullet,j} =
\begin{cases}
	0, & 1 \le j < k_i - 1, \\
	< 0, & j = k_i - 1.
\end{cases}
\]
Insertion in \eqref{to_solve_xstar} gives
\begin{multline*} 0\ =\
 \theta \e^{\mu_i+x_i^*}+\bfD_{i,\sbullet} \, \bfx^*\ =\
 -\bfD_{i,\sbullet} \, \bfbeta_{\sbullet,k_i-1} \e^{r_i(\theta)} \log_{k_i-1} \theta \\
 +\bfD_{i,\sbullet} \Bigg( \sum_{j=k_i-1}^n\bfbeta_{\sbullet,j}\log_j \theta
 -\bfmu+\bfc+\bfr(\theta) \Bigg)\,,
\end{multline*}
showing that the remainder is $\oh(1)$. \\

In the second case, with $i\in \Fset{+}$,
\[
 \beta_{i,1}<-1 \quad \text{and} \quad
 \bfD_{i,\sbullet} \, \bfbeta_{\sbullet,j}=0,\ 1 \leq j \leq n,
\]
or there exists
$1 < k_i \leq n$ such that
\[ \beta_{i,j} = \begin{cases}
	-1, & j = 1, \\
	0, & 2 \le j < k_i, \\
	< 0, & j = k_i,
\end{cases} \quad \text{and} \quad
	\bfD_{i,\sbullet} \, \bfbeta_{\sbullet,j}=0 \text{ for } 1 \le j \le n\,.
\]
For this case we find
\[
 \theta \e^{\mu_i+x_i^*}+\bfD_{i,\sbullet} \, \bfx^*=
 \oh(1)+\bfD_{i,\sbullet} \, \bfr(\theta),
\]
again showing that the remainder is $\oh(1)$. Lastly, to show $\bfx_\ms{-}$ in terms of $\bfx_\ms{+}$, consider $\theta \ve^{\bfmu_\ms{-} + \bfx_\ms{-}^*} + \bfD_{\ms{-},\ms{+}} \bfx_\ms{+} + \bfD_{\ms{-},\ms{-}} \bfx_\ms{-} = \bfzero$. As $\theta \ve^{\bfmu_\ms{-} + \bfx_\ms{-}^*} = \oh(1)$, then we can see that $\bfx_\ms{-} = -\bfD_{\ms{-},\ms{-}}^{-1} \bfD_{\ms{-},\ms{+}} \bfx_\ms{+} + \oh(1)= \bfC \bfx_\ms{+} + \oh(1)$.
$\hfill \square$
\end{proof}

In some cases above, we have been able to write the constant $c_i$ as an expression involving $\bfD$ and $\bfmu$. For example, in Proposition \ref{all_pos_prop} we have $c_i = \log a_i$, and in Theorem \ref{Th:24.8a} \eqref{consts} gives the value of $c_i$ for $i \in \Fset{-}$. We can show a similar result in the general case for all $i \in \Fset{*}(1)$, that is, for all $i$ where $x_i^* = -\log \theta + \log_2 \theta - \mu_i + c_i + \oh(1)$. \\

Say $\Fset{*} \defeq \Fset{*}(1)$ and $\Fset{\sim} \defeq \Fset{*}^\comp$; in the subscripts below, $*$ and $\sim$ refer to these sets. Since $\bfD$ is regular, so is $\bfD_{\ms{\sim},\ms{\sim}}$. Say that $\overline{\bfD} \defeq \bfD_{\ms{*},\ms{*}}-\bfD_{\ms{*},\ms{\sim}}\bfD_{\ms{\sim},\ms{\sim}}^{-1}\bfD_{\ms{\sim},\ms{*}}$, and denote the corresponding row sums by $\overline{\bfa} = (\overline{a}_i, i \in \Fset{*})$.

\begin{cor}\label{Cor:24.8d}
For all $i \in \Fset{*}$
\[
    x_i^* = {-}\log \theta + \log_2 \theta - \mu_i + \log \overline{a}_i + r_i(\theta)
\]
where $r_i(\theta) = \oh(1)$ and $\overline{a}_i > 0$ as $\theta \to \infty$.
\end{cor}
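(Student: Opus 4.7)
The plan is to use Theorem~\ref{Th:24.8a} to obtain the leading terms $-\log\theta+\log_2\theta-\mu_i$ automatically, and then to pin down the constant $c_i$ as $\log\overline{a}_i$ by an explicit computation of the minimiser $\bfbeta_{\sbullet,1}$. Step~2 of Algorithm~3.1 at $k=2$ gives $\Fset{+}(2)=\Fset{+}(1)\cup\Fset{*}(1)=\Fset{*}$, so every $i\in\Fset{*}$ lies in $\Fset{+}$ with $k_i=2$, $\beta_{i,1}=-1$ and $\beta_{i,2}=1$. According to Step~3, $c_i=\log(-\bfD_{i,\sbullet}\,\bfbeta_{\sbullet,\ell_i})$, where $\ell_i$ is the smallest index with $\bfD_{i,\sbullet}\,\bfbeta_{\sbullet,\ell_i}<0$; by the very definition of $\Fset{*}(1)$ in Step~1, that smallest index is $\ell_i=1$. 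Hence the only thing left to prove is that $-\bfD_{i,\sbullet}\,\bfbeta_{\sbullet,1}=\overline{a}_i>0$ for $i\in\Fset{*}$.

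To identify $\bfbeta_{\sbullet,1}$ I would use the KKT conditions for its defining program $\min_{\bfw}\bfw^\tr\bfD\bfw$ subject to $w_i\le -1$. Writing the Lagrange multipliers as $\bfgamma\ge\bfzero$, stationarity yields $\bfD\bfbeta_{\sbullet,1}=-\bfgamma/2$ and complementary slackness gives $\gamma_i(\beta_{i,1}+1)=0$. If $i\in\Fset{-}(1)$ then $\beta_{i,1}<-1$ forces $\gamma_i=0$, hence $\bfD_{i,\sbullet}\,\bfbeta_{\sbullet,1}=0$; and if $i\in\Fset{0}(1)$ then $\bfD_{i,\sbullet}\,\bfbeta_{\sbullet,1}=0$ directly by the definition of that set. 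In either case $\bfD_{i,\sbullet}\,\bfbeta_{\sbullet,1}=0$ for every $i\in\Fset{\sim}$, which in block form reads
\[
    \bfD_{\ms{\sim},\ms{*}}\,\bfbeta_{\ms{*},1}+\bfD_{\ms{\sim},\ms{\sim}}\,\bfbeta_{\ms{\sim},1}=\bfzero.
\]
Since $\bfD$ is positive definite the principal submatrix $\bfD_{\ms{\sim},\ms{\sim}}$ is invertible, so I may solve $\bfbeta_{\ms{\sim},1}=-\bfD_{\ms{\sim},\ms{\sim}}^{-1}\bfD_{\ms{\sim},\ms{*}}\,\bfbeta_{\ms{*},1}$, while on the active block $\bfbeta_{\ms{*},1}=-\bfone_{\ms{*}}$ by the very definition of $\Fset{*}$.

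Substituting these two expressions into $\bfD_{i,\sbullet}\,\bfbeta_{\sbullet,1}$ for $i\in\Fset{*}$ collapses the two blocks into the Schur complement:
\[
    \bfD_{i,\sbullet}\,\bfbeta_{\sbullet,1}
    = \bigl(\bfD_{\ms{*},\ms{*}}-\bfD_{\ms{*},\ms{\sim}}\bfD_{\ms{\sim},\ms{\sim}}^{-1}\bfD_{\ms{\sim},\ms{*}}\bigr)_{i,\sbullet}\,\bfbeta_{\ms{*},1}
    = -\overline{\bfD}_{i,\sbullet}\,\bfone_{\ms{*}} = -\overline{a}_i.
\]
Strict negativity of the left hand side, which holds by membership in $\Fset{*}$, then yields $\overline{a}_i>0$, and therefore $c_i=\log\overline{a}_i$, which is exactly the claim. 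The only step that needs genuine care is the KKT argument on $\Fset{-}(1)$, needed to extend the identity $\bfD_{i,\sbullet}\,\bfbeta_{\sbullet,1}=0$ from $\Fset{0}(1)$ (where it holds by fiat) to the whole of $\Fset{\sim}$; the remainder is routine block algebra.
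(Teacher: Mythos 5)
Your proof is correct and follows essentially the same route as the paper's: it uses $\bfD_{i,\sbullet}\,\bfbeta_{\sbullet,1}=0$ on $\Fset{\sim}$ (the paper cites the appendix result where you rederive it via KKT, but these are equivalent), solves the $\Fset{\sim}$-block for $\bfbeta_{\ms{\sim},1}$ with $\bfbeta_{\ms{*},1}=-\bfone$, and collapses to the Schur complement $\overline{\bfD}$ to get $-\bfD_{i,\sbullet}\,\bfbeta_{\sbullet,1}=\overline{a}_i>0$ and hence $c_i=\log\overline{a}_i$. The paper merely phrases the same algebra in terms of $\bfb=-\bfbeta_{\sbullet,1}$.
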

\begin{proof}
Let $\bfb = -\bfbeta_{\sbullet,1}$. We have
\[
	b_i = \begin{cases}
		1, & i \in \Fset{*}(1) \,\cup\, \Fset{0}(1), \\
		>1, & i \in \Fset{-}(1),
	\end{cases}
	\quad
	\bfD_{i,\sbullet} \bfb = \begin{cases}
		\e^{c_i}, & i \in \Fset{*}(1) = \Fset{*}, \\
		0, & i \in \Fset{0}(1) \,\cup\, \Fset{-}(1) = \Fset{\sim}.
	\end{cases}
\]
Split $\bfD$ according to indices in $\Fset{*}$ and $\Fset{\sim}$, then
\[ \bfD_{\ms{\sim},\ms{*}} \bfb_\ms{*} + \bfD_{\ms{\sim},\ms{\sim}} \bfb_\ms{\sim} = \bfzero \quad \text{and} \quad \bfD_{\ms{*},\ms{*}} \bfb_\ms{*} + \bfD_{\ms{*},\ms{\sim}} \bfb_\ms{\sim}  = \ve^{\bfc_\ms{*}}  > \bfzero\,. \]
The first equation gives $\bfb_{\ms{\sim}} =  - \bfD_{\ms{\sim},\ms{\sim}}^{-1} \bfD_{\ms{\sim},\ms{*}} \bfb_\ms{*}$, and this with the second equation shows
\[ \overline{\bfD} \bfb_\ms{*} = \overline{\bfD} \bfone = \overline{\bfa} = \ve^{\bfc_\ms{*}} > \bfzero \,,\]
thus $\overline{\bfD}$ has all row sums positive and
$\bfc_\ms{*} = \vlog(\overline{\bfD}\bfb_\ms{*}) = \vlog(\overline{\bfa})$. $\hfill \square$
\end{proof}

There are some simple forms of $\bfSigma$ which fall into the case where all $a_i > 0$. These include the case where all diagonal elements of $\bfSigma$ are identical, and all non-diagonal elements are identical. Note, by positive definiteness of $\bfSigma$ we must have at least one row sum positive. Also, if $X_1, \dots, X_n$ is an AR(1) process, then the resulting covariance matrix would have all $a_i > 0$. Meanwhile, cases where $\exists \, a_i \leq 0$ are not difficult to find. For the case $n=2$ with variances $\sigma_1^2 \leq \sigma_2^2$ and correlation $\rho$, a simple calculation gives that both row sums are positive when $\rho < \sigma_1 / \sigma_2$, and one is negative when $\rho > \sigma_1 / \sigma_2$ (see Gao et al.\ \cite{gao2009asymptotic} for the expansion of $f(x)$ as $x \downarrow 0$ for these cases). We now list a couple of examples of asymptotic forms of $\bfx^*$ for specific $\bfmu$ and $\bfSigma$ which have non-positive row sums.

\begin{ex}
Consider $\bfmu=(-10,0, 10)^\tr$ and
\[ \bfSigma \ =\ \left(\begin{array}{ccc}
            0.5 & 1 & 2 \\
            1 & 3 & 4 \\
            2 & 4 & 10
        \end{array}\right)\,, \quad
        \bfD \ =\ \left(\begin{array}{ccc}
            14 & -2 & -2 \\
            -2 & 1 & 0 \\
            -2 & 0 & 0.5
        \end{array}\right) . \]
Implementing the algorithm gives that
\begin{align*}
    x_1^* &= -\log \theta + \log_2 \theta +(10 +\log 2) + \oh(1) \,, \\
    x_2^* &= -2 \log \theta + 2 \log_2 \theta + (20 + 2\log 2) + \oh(1) \,, \\
    x_3^* &= -4 \log \theta + 4 \log_2 \theta + (40 + 4\log 2) + \oh(1) \,,
\end{align*}
and
\[
(\bfbeta \,|\, \bfc-\bfmu) = \left(\begin{array}{ccc|c}
            -1 & 1 & 0 & 10.69 \\
            -2 & 2 & 0 & 21.39 \\
            -4 & 4 & 0 & 42.77
        \end{array}\right) ,\,
    \bfD (\bfbeta \,|\, \bfc-\bfmu) = \left(\begin{array}{ccc|c}
            -2 & * & * & * \\
            0 & 0 & 0 & 0 \\
            0 & 0 & 0 & 0
    \end{array}\right)
\]
(where unimportant values of $\bfD (\bfbeta \,|\, \bfc-\bfmu)$ are replaced by stars).
$\hfill \square$
\end{ex}

\begin{ex}
Consider $\bfmu=(1,2,3)^\tr$ and
\[\bfSigma = \left(\begin{array}{ccc}
            0.4545 & 0.4545 & 0.4545 \\
            0.4545 & 1.7204 & 1.8470 \\
            0.4545 & 1.8470 & 2.9862
        \end{array}\right)\,, \quad \bfD \ =\ \left(\begin{array}{ccc}
            3 & -0.9 & 0.1 \\
            -0.9 & 2 & -1.1 \\
            0.1 & -1.1 & 1
        \end{array}\right) . \]
Implementing the algorithm gives that
\begin{align*}
    x_1^* &= -\log \theta + \log_2 \theta -1 +\log 2.2 + \oh(1) \,, \\
    x_2^* &= -\log \theta + \log_3 \theta -2 +\log 0.79 + \oh(1) \,, \\
    x_3^* &= -\log \theta -0.1\log_2 \theta + 1.1 \log_3 \theta -3 + c_3 + \oh(1) \,,
\end{align*}
where $c_{3} = 0.9 -0.1 \log 2.2 + 1.1 \log 0.79$, and
\[
(\bfbeta \,|\, \bfc-\bfmu) = \left(\begin{array}{ccc|c}
            -1 & 1 & 0 & -0.2 \\
            -1 & 0 & 1 & -2.2 \\
            -1 & -0.1 & 1.1 & -2.4
        \end{array}\right),\,
    \bfD (\bfbeta \,|\, \bfc-\bfmu) = \left(\begin{array}{ccc|c}
            -2.2 & * & *& * \\
            0 & -0.79 & * & * \\
            0 & 0 & 0 & 0
    \end{array}\right)
.
\]
$\hfill \square$
\end{ex}

\begin{rem}\label{Rem:25.8b} \rm The importance of the sign of the row sums of $\bfD$, as illustrated by Proposition \ref{all_pos_prop}, perplexed us for quite some time. However Gulisashvili and Tankov \cite{tankov2015tail} describe an interesting link between the row sums and the \emph{minimum variance portfolio}. They show that the leading asymptotic term of $\Prob(S_n < x)$ as $x \downarrow 0$ depends upon
\[ \overline{\bfw}^\tr \bfSigma \, \overline{\bfw} = \min_{\bfw \in \Delta} \bfw^\tr \bfSigma \, \bfw \,, \text{ where }
\Delta \defeq \{ \bfw : \sum_i w_i = 1, w_i \geq 0 \} \,. \]
The $i$ in which $\overline{w}_i > 0$ indicate which summands in $S_n$ have the `least variance'. These summands are asymptotically important in the left tail, as they will struggle the most to take very small values. Seen from the viewpoint of modern portfolio theory \cite{markowitz1952portfolio} then the solution $\overline{\bfw}$ is viewed as the optimal portfolio weights to create the minimum variance portfolio. When all $a_i > 0$ then $\overline{w}_i = a_i / \sum_{j=1}^n a_j$ which represents full diversification. However when assets become highly correlated (meaning that some $\bfD$ row sums are non-positive) then $\exists \, \overline{w}_i = 0$, i.e., some assets are ignored. Thus the asymptotics are qualitatively different when the signs of the row sums change. The exact point where an asset's optimal weight becomes 0 occurs when $a_i = 0$, and this phase change produces a unique and convoluted asymptotic form. As $\Laplace(\theta)$ as $\theta \to \infty$ is related to $\Prob(S_n < x)$ as $x \downarrow 0$ then the behaviour of $\bfx^*$ is explained. $\hfill \diamond$
\end{rem}

For applications we will need to find $\bfx^*$ for a large number of $\theta$ numerically. The results above give a sensible starting point for an iterative solver, such as Newton--Raphson. Another option is based on the following formulation. Let $\bfA \defeq \bfD - \diag(\bfD)$ and write the defining equation as
\[ \theta \ve^{\bfmu + \bfx^*} + \diag(\bfD) \, \bfx^* = - \bfA \bfx^* \,. \]
For each row $i$, all $x_i^*$ are now on the left-hand side. Using properties of the Lambert W function we see that
\[ x_i^* = -\LambertW\left( \frac{\theta \e^{\mu_i}}{D_{i,i}} \exp\left\{ - \frac{\bfA_{i,\sbullet} \, \bfx^*}{D_{i,i}} \right\} \right)  - \frac{\bfA_{i,\sbullet} \, \bfx^*}{D_{i,i}} \,. \]
One can use this to perform a componentwise fixed point iteration as an alternative to the Newton--Raphson scheme.

\section{Asymptotic behaviour of $I(\theta)$}

In order to discuss $I(\theta)$ as $\theta \to \infty$ we will consider it in a  form different from
Section~\ref{S:ApprL}. Define
$ \bfsigma \defeq \diag(\bfLambda + \bfD)^{-1/2} \in (0,\infty)^n$ and $\bfM \defeq \diag(\bfsigma) \, (\bfLambda + \bfD) \, \diag(\bfsigma) \in \RL^{n \times n}$.
In \eqref{I_in_propos}, substitute $\bfSigma^{1/2} (\bfI+\bfSigma \bfLambda)^{-1/2} \bfy = (\bfsigma \circ \bfz)$, so
\begin{align} \label{new_I}
	I(\theta) &= \int_{\RL^n} \frac{\exp\{{-}\frac12 \bfz^\tr \bfM \bfz\}}{\sqrt{(2\pi)^n\det{\bfM^{-1}}}}
	\exp\Big\{
    {-}
    \theta (\ve^{\bfmu + \bfx^*})^\tr  \Big( \ve^{\bfsigma \circ \bfz} - \bfone - \bfsigma \circ \bfz - \frac12(\bfsigma \circ \bfz)^{\bm{2}} \Big)
    \Big\}\dd \bfz \,.
\end{align}

The limit of this integrand is the density of a multivariate normal distribution, which when integrated is 1. To see this, consider the following. As $\theta \to \infty$ we have $\sigma_i \to \infty$ or $\sigma_i \to D_{i,i} > 0$, so taking $\ell \in (2,\infty)$ means
\begin{equation} \label{oh_one}
	\theta \e^{\mu_i + x_i^*} \sigma_i^\ell = \theta \e^{\mu_i + x_i^*} (\theta \e^{\mu_i + x_i^*} + D_{i,i})^{-\ell/2} = \oh(1) \,.
\end{equation}
Consider the second exponent of \eqref{new_I}. For fixed $\bfz$, $\e^{\sigma_i z_i} - 1 - \sigma_i z_i - \frac12 \sigma_i^2 z_i^2 = \Oh(\sigma_i^3)$, and since $\theta \e^{\mu_i + x_i^*} \sigma_i^3 = \oh(1)$ by \eqref{oh_one} we have
\begin{equation} \label{second_exp_to_zero}
	\theta (\ve^{\bfmu + \bfx^*})^\tr  \Big( \ve^{\bfsigma \circ \bfz} - \bfone - \bfsigma \circ \bfz - \frac12(\bfsigma \circ \bfz)^{\bm{2}} \Big) = \oh(1) \,.
\end{equation}

Finally, we consider $\bfM$ as $\theta \to \infty$. Say that $n_\ms{+} \defeq |\Fset{+}|$ and assume that these are the first $n_\ms{+}$ indices. We can then write that $\bfM \to \bfM^* \defeq \diag(\bfI_{n_\ms{+}}, \bfF)$ where this $\bfF$ is the bottom-right submatrix of size $(n-n_\ms{+}) \times (n-n_\ms{+})$ of the inverted correlation matrix implied by $\bfSigma$. The $\bfM$ matrices are positive definite for all $\theta\in(0,\infty]$, thus the limiting form of the integrand in \eqref{new_I} is a nondegenerate multivariate normal density.

\begin{prop} $\displaystyle \lim_{\theta \to \infty} I(\theta) = 1$.
\end{prop}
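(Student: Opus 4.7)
The plan is to apply dominated convergence to the representation \eqref{new_I}. The build-up already supplies pointwise convergence of the integrand to the density of $N(\bfzero,(\bfM^*)^{-1})$: the prefactor converges by $\bfM\to\bfM^*$, the quadratic exponent $-\tfrac12\bfz^\tr\bfM\bfz$ does too, and the remaining exponent vanishes by \eqref{second_exp_to_zero}. Since $\bfM^*$ is positive definite, the limit is a bona fide probability density integrating to $1$, so the proof reduces to exhibiting a $\theta$-independent integrable dominating function.

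For this, write $\alpha_i \defeq \theta\e^{\mu_i+x_i^*}$ and use the identity $\sigma_i^{-2}=\alpha_i+D_{ii}$ (equivalently $M_{ii}=1$) to regroup the exponent of \eqref{new_I} as
\[ E(\bfz)\ =\ -\tfrac12\,\bfz^\tr\bfN\bfz\ -\ \sum_{i=1}^n \alpha_i\,\phi(\sigma_i z_i), \]
where $\bfN\defeq\diag(\bfsigma)\,\bfD\,\diag(\bfsigma)$ is positive definite and $\phi(u)\defeq \e^u-1-u\geq 0$. Both terms are manifestly nonpositive, and they provide decay in complementary coordinate blocks: the quadratic form in the $\Fset{-}$-directions, where $\bfN_{\ms{-},\ms{-}}\to\bfF$; and the exponential penalty in the $\Fset{+}$-directions, where $\alpha_i\sigma_i^2 = 1-D_{ii}\sigma_i^2\to 1$ while $\sigma_i\to 0$. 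The essential elementary estimate is $\phi(u)\geq u^2/(2+|u|)$ for every $u\in\RL$, obtained by showing that $(2+|u|)\phi(u)-u^2$ and its derivative vanish at $0$ while its second derivative is nonnegative on each half-line. Combined with the asymptotics above, this yields for $\theta$ past some $\theta_0$ the uniform lower bound $\alpha_i\phi(\sigma_i z_i)\geq c z_i^2/(1+|z_i|)$ for each $i\in\Fset{+}$, which gives decay like $\e^{-c|z_i|}$ in each such coordinate. A Schur-complement decomposition of $\bfz^\tr\bfN\bfz$ shows $\tfrac12\bfz^\tr\bfN\bfz\geq c_1|\bfz_\ms{-}|^2$ on the region $|\bfz_\ms{-}|\geq\epsilon|\bfz_\ms{+}|$ for $\theta\geq\theta_0$, with an arbitrarily small $\epsilon>0$; a separate volume argument handles the complementary region $|\bfz_\ms{-}|<\epsilon|\bfz_\ms{+}|$. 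Putting the two together produces a $\theta$-uniform integrable upper bound on the integrand, and dominated convergence gives $I(\theta)\to 1$.

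The main obstacle is precisely the construction of this dominating function: the quadratic form $\bfz^\tr\bfN\bfz$ degenerates in the $\Fset{+}$-directions as $\theta\to\infty$, so one must substitute the exponential penalty $\sum_{i\in\Fset{+}}\alpha_i\phi(\sigma_i z_i)$ for the missing quadratic decay there, keeping constants uniform in $\theta\geq\theta_0$. The Schur complement naturally controls only $|\bfz_\ms{-}+\mathrm{small}\cdot\bfz_\ms{+}|$ rather than $|\bfz_\ms{-}|$ alone, so a residual proportional to $|\bfz_\ms{+}|^2$ appears that is too large to be absorbed into the linear-in-$|z_i|$ decay produced by $\phi$; this forces one to split the $\bfz$-space by the relative sizes of $|\bfz_\ms{-}|$ and $|\bfz_\ms{+}|$ and bound the integrand separately on each piece.
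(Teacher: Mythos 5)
Your argument is correct but reaches the dominating function by a genuinely different mechanism than the paper. Writing the exponent as $E(\bfz)=-\tfrac12\bfz^\tr\bfN\bfz-\sum_i\alpha_i\phi(\sigma_i z_i)$ exactly as you do (a regrouping the paper leaves implicit), the paper's key tool is that $E$ is \emph{concave} with $E(\bfzero)=0$: since the limiting quadratic form is positive definite and the convergence is uniform on the compact unit sphere, there are $g_2>0$ and $\theta_0$ with $E(\bfz)\leq -g_2$ whenever $||\bfz||=1$ and $\theta>\theta_0$, and concavity radializes this at once to $E(\bfz)\leq -g_2||\bfz||$ for $||\bfz||\geq 1$, producing the dominant $\exp\{g_1\Ind_{\{||\bfz||\leq 1\}}-g_2||\bfz||\Ind_{\{||\bfz||>1\}}\}$ in a single step. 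You instead build the dominant by hand: the estimate $\phi(u)\geq u^2/(2+|u|)$ converts the exponential penalty into at-least-linear decay in each $\Fset{+}$-coordinate, while a Schur-complement bound on $\bfz^\tr\bfN\bfz$ plus a two-region split by the relative sizes of $|\bfz_\ms{-}|$ and $|\bfz_\ms{+}|$ supplies the $\Fset{-}$-block decay. Both routes are sound, but the paper's is appreciably shorter: the one structural fact that $E$ is concave and vanishes at the origin automatically handles the interaction between the degenerate $\Fset{+}$-directions (where $\bfN$ collapses) and the nondegenerate $\Fset{-}$-directions --- precisely the interaction your Schur complement and region split are there to manage by hand. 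Your observation that both terms of $E$ are nonpositive (so $\e^{E(\bfz)}\leq 1$ globally) is a small simplification the paper does not state. The one spot in your sketch that would need sharpening is the ``separate volume argument'' on $\{|\bfz_\ms{-}|<\epsilon|\bfz_\ms{+}|\}$: dominated convergence wants a pointwise dominating function, so make explicit that on that region you simply drop the $\bfN$-term, use $\prod_{i\in\Fset{+}}\exp\{-c\,z_i^2/(1+|z_i|)\}$ as the pointwise bound, and verify integrability after integrating out the thin $\bfz_\ms{-}$-ball of radius $\epsilon|\bfz_\ms{+}|$.
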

\begin{proof}
We use the dominated convergence theorem. By \eqref{second_exp_to_zero} and the paragraph which follows that equation, the exponent of the integrand is bounded by a constant $g_1$ for $||\bfz|| < 1$, say, and that the exponent is below $-g_2 ||\bfz||$ otherwise ($g_2 > 0$), for $\theta > \theta_0$, say. The latter comes from the positive definiteness of $\bfM^*$, the convergence of $\bfM$ to $\bfM^*$, and the convergence of \eqref{second_exp_to_zero}. Next, convexity implies that the exponent is bounded by $−g_2||\bfz||$ for $||\bfz|| > 1$. In total we have the bound
\[ \exp \Big\{ g_1 \Ind_{\{||\bfz|| \leq 1\}} - g_2 ||\bfz|| \Ind_{\{||\bfz|| > 1\}} \Big\} \,, \]
which is an integrable function. Thus the conditions for dominated convergence are satisfied and we can safely
switch the limit and integral to obtain $I(\theta) \to 1$.
$\hfill \square$
\end{proof}

\section{Estimators of $\Laplace(\theta)$ and $I(\theta)$}

The simplest approach is to numerically integrate the original expression in \eqref{laplace_def}. This approach is used as a baseline against which the following estimators are compared (the approach can, however,  be slow or impossible for large $n$). The next na{\"i}ve approach is to estimate the expectation $\Exp[\e^{-\theta S_n}]$ by crude Monte Carlo (CMC). This would involve simulating random vectors $X_1, \dots, X_R \iidDist \mathrm{LN}(\bfmu, \bfSigma)$, with $X_r = (X_{r,1}, \dots, X_{r,n})$, and computing
\[ \widehat{\Laplace}_{\mathrm{CMC}}(\theta) \defeq \frac{1}{R} \sum_{r=1}^R \exp\Big\{-\theta \sum_{i=1}^n X_{r,i}\Big\}\,. \]
However this estimator is not efficient for large $\theta$, and rare-event simulation techniques are required. \\

Given the decomposition of $\Laplace(\theta) = \widetilde{\Laplace}(\theta) I(\theta)$, then some more accurate estimators can be assessed. Simply using $\widetilde{\Laplace}(\theta)$ gives a biased estimator (which is fast and deterministic) for the transform, however the bias is decreased by estimating $I(\theta)$ with Monte Carlo integration. Proposition \ref{value_of_I} gives two probabilistic representations of $I(\theta)$. We expect the CMC estimator of the first --- $\Exp[g(\bfSigma^{1/2} (\bfI+\bfSigma \bfLambda)^{-1/2}Z) ]$ --- to exhibit infinite variance as $\theta \to \infty$ as this has been proven for $n=1$ in \cite{asmussen2014laplace}. Therefore this estimator does not seem promising. The second estimator --- $\sqrt{\det{\bfI + \bfSigma \bfLambda}} \,\, \Exp[ v(\bfSigma^{1/2} Z) ]$ --- can be viewed as the first estimator after importance sampling has been applied, so we focus upon this. Taking $Z_1, \dots, Z_R \iidDist \mathrm{N}(\bfzero, \bfSigma)$, then
\[ \widehat{\Laplace}_{\mathrm{IS}}(\theta) \defeq \frac{1}{R}  \exp\left\{ \Big(\bfone - \frac12 \bfx^* \Big)^\tr \bfD \bfx^* \right\}  \sum_{r=1}^R \exp\left\{ (\bfx^*)^\tr \bfD (\ve^{Z_r} - \bfone - Z_r) \right\} \,. \]

Many variance reduction techniques can be applied to increase the efficiency of these estimators. The effect of including control variates into $\widehat{\Laplace}_{\mathrm{IS}}(\theta)$ was considered, using the control variate $(\bfx^*)^\tr \bfD Z_r^{\bm{2}}$ (note the elementwise square). The variance reduction achieved was small considering the large overhead of computing the variates (and their expectations) so these results have been omitted. Lastly, we considered an estimator based on the Gumbel distribution. Say that $G = (G_1, \dots, G_n)$ is a vector of i.i.d.\ standard Gumbel random variables, that is, $\Prob(G_r < x) = \exp\{ -\e^{-x} \}$ for $x \in \RL$. Then $\Laplace(\theta)$ can be rewritten as an integral over the density of a vector of standard Gumbel random variables. This estimator was quite accurate, though it had higher relative error and variance than the estimators based on $\widehat{\Laplace}_{\mathrm{IS}}(\theta)$ so it too has been excluded from the results. \\

The final two variance reduction techniques investigated were \emph{common random numbers} and \emph{quasi-Monte Carlo} applied to $\widehat{\Laplace}_{\mathrm{IS}}(\theta)$; for a detailed explanation of these techniques see \cite{glasserman2003monte} or \cite{asmussen2007stochastic}. Both individually achieved significant variance reduction, and together provided the best estimator. Specifically,
\[ \widehat{\Laplace}_{\mathrm{Q}}(\theta) \defeq \frac{1}{R} \exp\Big\{ \Big(\bfone - \frac12 \bfx^* \Big)^\tr \bfD \bfx^* \Big\}  \sum_{r=1}^R \exp\left\{ (\bfx^*)^\tr \bfD (\ve^{\bfq_r} - \bfone - \bfq_r) \right\} \]
where $\bfq_r \defeq \bfSigma^{1/2} \mathrm{\bf\Phi}^{-1}(\bfu_r)$, using $\mathrm{\bf\Phi}^{-1}(\cdot)$ as the (elementwise) standard normal inverse c.d.f., and where $\{\bfu_1, \bfu_2, \dots\}$ is the $n$ dimensional Sobol sequence started at the same point for every $\theta$. Therefore, $\widehat{\Laplace}_{\mathrm{Q}}(\theta)$ is deterministic (for a fixed $R$ and $\theta$), and using this scheme is therefore a kind of numerical quadrature. More sophisticated adaptive quadrature methods
could possibly be applied.

\section{Numerical Results}

Relative errors are given for the main estimators of $\Laplace(\theta)$ in the table below. In all estimators the smoothing technique of using common random variables is employed, and all estimators are compared against numerical integration of the relevant integrals to 15 significant digits.
\setlength\extrarowheight{3pt}
\begin{table}[H]
\centering
\begin{tabular}{cccccc|}
\cline{2-6}

\multicolumn{1}{c|}{$\theta$}& 100 & 2{,}500 & 5{,}000 & 7{,}500 & 10{,}000 \\ \hline

\multicolumn{1}{|c|}{$\widetilde{\Laplace}$}&
\multicolumn{1}{c|}{-9.89e-3 }&
\multicolumn{1}{c|}{-1.27e-2 }&
\multicolumn{1}{c|}{-1.28e-2 }&
\multicolumn{1}{c|}{-1.27e-2 }&
\multicolumn{1}{c|}{-1.27e-2 }
\\ \hline

\multicolumn{1}{|c|}{$\widehat{\Laplace}_{\mathrm{CMC}}$}& \multicolumn{1}{c|}{1.29e-2 }& \multicolumn{1}{c|}{*}& \multicolumn{1}{c|}{*}& \multicolumn{1}{c|}{*}& \multicolumn{1}{c|}{*} \\ \hline

\multicolumn{1}{|c|}{$\widehat{\Laplace}_{\mathrm{IS}}$}& \multicolumn{1}{c|}{3.36e-4 }& \multicolumn{1}{c|}{2.96e-4 }& \multicolumn{1}{c|}{2.57e-4 }& \multicolumn{1}{c|}{2.31e-4 }& \multicolumn{1}{c|}{2.11e-4 } \\ \hline

\multicolumn{1}{|c|}{$\widehat{\Laplace}_{\mathrm{Q}}$}& \multicolumn{1}{c|}{-3.19e-6 }& \multicolumn{1}{c|}{-5.03e-6 }& \multicolumn{1}{c|}{-5.31e-6}& \multicolumn{1}{c|}{-5.56e-6 }& \multicolumn{1}{c|}{-5.98e-6 } \\ \hline
\end{tabular}
\caption{Relative error for various approximations of $\Laplace(\theta)$ for $\bfmu = \bfzero$, $\bfSigma = [1, 0.5; 0.5, 1]$. The number of Monte Carlo replications $R$ used is $10^6$. Note: a * indicates that the CMC estimator simply gave an estimate of 0.}
\end{table}

Also, the p.d.f.\ of $S_n$ can be estimated by numerical inversion of the Laplace transform. As the approximations of $\Laplace(\theta)$ above are only valid for $\theta \in (0, \infty)$, not $\theta \in \CL_\ms{+}$, then this restricts the options for Laplace transform inversion algorithms. The Gaver--Stehfest algorithm \cite{stehfest1970algorithm} and so-called power algorithms \cite{avdis2007power} can be used. We report on the results of using the Gaver--Stehfest algorithm as implemented by Mallet \cite{mallet1985numerical}. \\

Other options for estimating $f(x)$ include numerically integrating the convolution equation (typically this is only viable for small $n$), the conditional Monte Carlo method (as in Example 4.3 on page 146 of \cite{asmussen2007stochastic}), and kernel density estimation. The following estimators are reported: the conditional Monte Carlo estimator $\widehat{f}_{\mathrm{Cond}}$, $\widetilde{f} \defeq \Laplace^{-1}(\widetilde{\Laplace}(\cdot))$, $\widehat{f}_{\mathrm{IS}} \defeq \Laplace^{-1}(\widehat{\Laplace}_{\mathrm{IS}}(\cdot))$, and $\widehat{f}_{\mathrm{Q}} \defeq \Laplace^{-1}(\widehat{\Laplace}_{\mathrm{Q}}(\cdot))$.

\begin{table}[H]
\centering

\begin{tabular}{cccccc|}
\cline{2-6}
\multicolumn{1}{c|}{$x$}& 0.01 & 1 & 1.5 & 2 & 3 \\ \hline
\multicolumn{1}{|c|}{$\widehat{f}_{\mathrm{Cond}}$}& \multicolumn{1}{c|}{-1.17e-1 }& \multicolumn{1}{c|}{2.20e-2 }& \multicolumn{1}{c|}{3.72e-3 }& \multicolumn{1}{c|}{5.21e-3 }& \multicolumn{1}{c|}{-4.60e-3 } \\ \hline

\multicolumn{1}{|c|}{$\widetilde{f}$}& \multicolumn{1}{c|}{-7.03e-3}& \multicolumn{1}{c|}{2.56e-2 }& \multicolumn{1}{c|}{1.79e-2 }& \multicolumn{1}{c|}{6.00e-2 }& \multicolumn{1}{c|}{3.82e-2 } \\ \hline

\multicolumn{1}{|c|}{$\widehat{f}_{\mathrm{IS}}$}& \multicolumn{1}{c|}{1.94e-3 }& \multicolumn{1}{c|}{1.43e-2 }& \multicolumn{1}{c|}{-6.13e-3}& \multicolumn{1}{c|}{4.00e-2 }& \multicolumn{1}{c|}{3.68e-3 } \\ \hline

\multicolumn{1}{|c|}{$\widehat{f}_{\mathrm{Q}}$}& \multicolumn{1}{c|}{2.90e-4 }& \multicolumn{1}{c|}{1.11e-2 }& \multicolumn{1}{c|}{-9.04e-3}& \multicolumn{1}{c|}{3.70e-2 }& \multicolumn{1}{c|}{2.44e-3 } \\ \hline

\end{tabular}
\caption{Relative errors for estimators of $f(x)$ for $\bfmu = \bfzero$ and $\bfSigma = [1, 0.5; 0.5, 1]$. The number of Monte Carlo repetitions for each $x$ is $R=10^4$ for $\widehat{f}_{\mathrm{Cond}}$, $\widehat{f}_{\mathrm{IS}} $ and $\widehat{f}_{\mathrm{Q}} $.}
\end{table}

The numerically inverted Laplace transforms are surprisingly accurate. Using common random numbers for the $\Laplace(\theta)$ estimators was necessary, otherwise the inversion algorithms became confused by the non-smooth input. The precision of the inversion algorithms cannot be arbitrarily increased when using standard double floating point arithmetic \cite{abate2006unified}, so the software suite Mathematica was used. Yet this did not solve the problem of the Gaver--Stehfest algorithm becoming unstable (and very slow) when trying to increase the desired precision. Also, the inversion results became markedly poorer when $f(x)$ exhibited high kurtosis (i.e., when $\det{\bfSigma}$ became small).

\section{Closing Remarks}\label{S:Remarks}

The estimators above give an accurate, relatively simple, and computationally swift method of computing the Laplace transform of the sum of dependent lognormals. We have shown that the approximation's error diminishes to zero ($I(\theta) \to 1$) as $\theta \to \infty$, and that it is still accurate for small values of $\theta$. One can find $\bfx^*$ --- for each $\theta$ examined --- using a Newton--Raphson scheme and Section \ref{sec:x_star_asymp_} gives an accurate starting value for the iterations.

\appendix

\section{Remaining steps in the proof of Theorem~\ref{Th:24.8a}}
\begin{proof}
First we note that all the minimisations are convex problems and therefore have unique solutions. \\

For the initial step of the algorithm let $\overline \bfw$ be the solution of the minimisation problem and let $\bfe_i$ be the vector with 1 at coordinate $i$ and zero at the other coordinates. Then
$g_i(\epsilon)=(\overline \bfw+\epsilon \bfe_i)^\tr\bfD(\overline \bfw+\epsilon \bfe_i)$ is minimised at $\epsilon=0$. When $\overline w_i<-1$ the vector $\overline \bfw+\epsilon \bfe_i$ is in the search set for all $\epsilon$ small. We therefore have $g_i^\prime(0)=0$ which gives $\bfD_{i,\sbullet}\overline \bfw=0$.
When $\overline w_i=-1$ the vector $\overline \bfw+\epsilon \bfe_i$ is
in the search set only for nonpositive values of $\epsilon$.
This implies $g_i^\prime(0)\leq 0$ giving $\bfD_{i,\sbullet}\overline \bfw\leq 0$. \\

For the general recursive step we let $\bfu=\bfw_{\Fset{0}(k-1)}$ and express
$\bfw_{\Fset{-}(k-1)}$ in terms of $\bfu$ from the equations
$\bfD_{i,\sbullet}\bfw=0$, $i\in \Fset{-}(k-1)$. The derivative of $\bfw^\tr \bfD \bfw$ with respect to
$u_i$ ($i$ being the index inherited from $\bfw$) is then
$2\bfD_{i,\sbullet}\bfw+2(\partial \bfw_{\Fset{-}(k-1)}/\partial u_i)\bfD_{\Fset{-}(k-1)}\bfw=
2\bfD_{i,\sbullet}\bfw$. As above we find that the derivatives of $\bfw^\tr \bfD \bfw$
with respect to $u_i$ at the minimising point is
zero when $u_i<0$ and less than or equal to zero when $u_i=0$. \\

What is left to prove is that $\Fset{0}(k)$ always has at least one element
with $\bfD_{i,\sbullet}\bfbeta_{\sbullet,k+1}<0$. To this end define
$d_1=-\bfbeta_{\sbullet,1}$ and $d_k=d_{k-1}-\bfbeta_{\sbullet,k}$ for $k>1$.
From the properties of $\bfbeta$ we find
\begin{align*}
 & d_{\Fset{+}(k),k}=0;\quad
 d_{\Fset{*}(k),k}=1\ \text{and}\ \bfD_{\Fset{*}(k)}d_k>0;
 \\ &
 d_{\Fset{0}(k),k}=1\ \text{and}\ \bfD_{\Fset{0}(k)}d_k=0;
 \quad
 \bfD_{\Fset{-}(k)}d_k=0.
\end{align*}
Assume now that $\bfD_{i,\sbullet}\bfbeta_{\sbullet,k+1}=0$ for all
$i\in \Fset{0}(k)$. We show that this leads to a contradiction.
Using the assumption $\bfbeta_{\sbullet,k+1}$ has the properties
\begin{align*}
 & \bfbeta_{\Fset{+}(k),k+1}=0;\quad
 \bfbeta_{\Fset{*}(k),k+1}=1;
 \\ &
 \bfbeta_{\Fset{0}(k),k+1}\leq 0\ \text{and}\ \bfD_{\Fset{0}(k)}\bfbeta_{\sbullet,k+1}=0;
 \quad
 \bfD_{\Fset{-}(k)}\bfbeta_{\sbullet,k+1}=0.
\end{align*}
Combining the two displays we have
\[
 \bfD_{\Fset{0}(k)}d_k=\bfD_{\Fset{0}(k)}\bfbeta_{\sbullet,k+1}\ \text{and}\
 \bfD_{\Fset{-}(k)}d_k=\bfD_{\Fset{-}(k)}\bfbeta_{\sbullet,k+1}.
\]
Since $d_k$ and $\bfbeta_{\sbullet,k+1}$ are identical on $\Fset{+}(k-1)$ and
$\Fset{*}(k-1)$ the equations reduce to
\[
 \bfD_0
  \left(\begin{array}{c}
 d_{\Fset{0}(k),k} \\ d_{\Fset{-}(k),k}
 \end{array}
 \right)
 =
 \bfD_0
  \left(\begin{array}{c}
 \bfbeta_{\Fset{0}(k),k} \\ \bfbeta_{\Fset{-}(k),k}
 \end{array}
 \right),
 \ \text{where}\
 \bfD_0=\left(\begin{array}{cc}
 \bfD_{\Fset{0}(k),\Fset{0}(k)} & \bfD_{\Fset{0}(k),\Fset{-}(k)} \\
 \bfD_{\Fset{-}(k),\Fset{0}(k)} & \bfD_{\Fset{-}(k),\Fset{-}(k)}
 \end{array}
 \right).
\]
Since the matrix $\bfD_0$ is positive definite and since
$d_{\Fset{0}(k),k}\neq \bfbeta_{\Fset{0}(k),k}$ we have reached a contradiction. \hfill $\square$
\end{proof}

\bibliographystyle{apt}
\bibliography{dependence}

\end{document}